\documentclass[12pt, one side,emlines]{amsart}
\usepackage{amssymb,latexsym,xy,eucal,mathrsfs, graphicx, tikz}
\textwidth=15.1cm \textheight=24cm \theoremstyle{plain}
\newtheorem{lem}{Lemma}[section]

\newtheorem{theorem}[lem]{Theorem}
\newtheorem{thm}[lem]{Theorem}
\newtheorem{prop}[lem]{Proposition}
\newtheorem{cor}[lem]{Corollary}

\theoremstyle{definition}

\newtheorem{rem}[lem]{Remark}
\newtheorem{note}[lem]{Note}

\newtheorem{defn}[lem]{Definition}

\numberwithin{equation}{section} \thispagestyle{empty} \voffset
-55truept \hoffset -18truept
\begin{document}
\baselineskip 16truept 
\title{Natural Partial Order on rings with involution}
\author[Avinash Patil and B. N. Waphare]%
{Avinash Patil$^1$ and B. N. Waphare$^2$}
\address{$1$ Department of Mathematics, Garware
College of
Commerce, Pune-411004, India.}
\address{$2$ Center For Advanced Studies In Mathematics, Department of Mathematics, Savitribai Phule Pune University,
Pune-411007, India.}
 \email{avipmj@gmail.com}
 \email{waphare@yahoo.com;
 bnwaph@math.unipune.ac.in}
  \maketitle

\begin{abstract}
In this paper, we introduce a partial order on rings with involution, which is a
generalization of the  partial order on the set of projections in a
Rickart $*$-ring. We prove that, a $*$-ring with the natural partial order form a sectionally semi-complemented poset. It is proved that every interval $[0,x]$ forms an orthomodular lattice in case of abelian Rickart $*$-rings.  
The concepts of generalized comparability $(GC)$ and partial
comparability $(PC)$ are extended to involve all the elements of
a $*$-ring. Further, it is proved that these concepts are equivalent in finite abelian Rickart $*$-rings.
\end{abstract}

\noindent {\bf Keywords :} $*$-ring, partial order, generalized comparability, partial comaparbility.
{\bf MSC(2010):} {Primary: $16$W$10$, Secondary: $06$F$25$}
\section{introduction}  An \textit{involution} $*$ on an associative ring $R$ is a mapping such that $(a+b)^*=a^*+b^*$,
$(ab)^*=b^*a^*$ and $(a^*)^*=a$, for all $ a,b\in R$. A ring with
involution $*$ is called a {\it $*$-ring}. Clearly, identity mapping is an
involution if and only if the ring is commutative. An element $e$ of
a $*$-ring $R$ is a \textit{projection} if $e=e^2$ and $e=e^*$. For
a nonempty subset $B$ of $R$, we write $r(B)=\{x\in R\colon
bx=0,\forall b\in B\}$, and call a \textit{right annihilator} of $B$
in $R$. A \textit{Rickart} $*$-\textit{ring} $R$ is a  $*$-ring   in which
right annihilator of every element is generated, as a right ideal,
by a projection in $R$. Every Rickart $*$-ring contains unity. For
each element $a$ in a Rickart $*$-ring, there is unique projection
$e$ such that $ae=a$ and $ax=0$ if and only if $ex=0$, called a {\it
right projection} of $a$ denoted by $RP(a)$. In fact,
$r(\{a\})=(1-RP(a))R$. Similarly, the left annihilator $l(\{a\})$ and the left
projection $LP(a)$ are defined for each element $a$ in a Rickart $*$-ring $R$. The
set of projections $P(R)$ in a Rickart $*$-ring $R$ forms a lattice, denoted by
$L(P(R))$, under the partial order `$e\leq f$ if and only if
$e=fe=ef$'. In fact, $e\vee f=f+RP(e(1-f))$ and $e\wedge
f=e-LP(e(1-f))$. This
lattice is extensively studied by I. Kaplanski \cite{kp}, S. K.
Berberian \cite{skb}, S. Maeda in \cite{ms2, ms3} and others.

    Drazin \cite{d}, was the first to introduce ``$*$-order'' to involve all elements, where $*$-order is given by, $a\underset{*}{\leqslant} b$ if and only if $a^*a=a^*b$ and $aa^*=ba^*$, which is a partial order on a semigroup with
 proper involution({\it i.e.}, $aa^*=0$ implies $a=0$). In
 particular, with the obvious choices for $*$-rings with proper involution, all commutative rings
 with no nonzero nilpotent elements, all Boolean rings, the ring
 $B(H)$ of all bounded linear operators on any complex Hilbert space
 $H$, the Rickart $*$-ring. Janowitz \cite{j} studies $*$-order on Rickart $*$-ring. Thakare and Nimbhorkar \cite{nsk} used $*$-order on Rickart $*$-ring and generalized the comparability axioms to involve all elements of $*$-ring.  Mitsch
\cite{mt} defined a partial order on a semigroup. We modify that order to have partial order on a $*$-ring.

In this paper, we introduce a partial order on a $*$-ring which is a generalization of the partial order on the set of projections in a Rickart $*$-ring. For a $*$-ring $R$, it is proved that the poset  $(R, \leq)$ is an sectionally semi-complemented (SSC) poset. For an abelian Rickart $*$-ring, we prove that every interval $[0,x]$ is an orthomodular poset, in fact, an orthomodular lattice. In the last section, comparability axioms are introduced to involve all elements of the $*$-ring. 
\section {natural partial order  and its properties}
We introduce an order on a $*$-ring with unity.
\begin{defn}\label{d1}Let $R$ be a $*$-ring with unity. Define a
 relation `$\leq$' on $R$ by `$a\leq b$ if and only if $a=xa=xb=ax^*=bx^*$,  for
 some $x\in R$'.
\end{defn}

\begin{prop}
Let $R$ be a $*$-ring with unity. Then the relation `$\leq$'
given in Definition \ref{d1} is a partial order on $R$.
\end{prop}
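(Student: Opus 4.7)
The plan is to verify, in turn, that $\leq$ is reflexive, antisymmetric, and transitive on $R$, with the last being the step that actually exploits the shape of the definition.

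\smallskip

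\noindent\textbf{Reflexivity.} I would first record that in any $*$-ring with unity one has $1^{*}=1$: the identity $a=1\cdot a$ for every $a$ yields $a^{*}=a^{*}\cdot 1^{*}$, showing that $1^{*}$ is a right identity, and symmetrically a left identity, so $1^{*}=1$ by uniqueness. Taking $x=1$ and $b=a$ then gives $a=xa=xb=ax^{*}=bx^{*}$, so $a\leq a$.

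\smallskip

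\noindent\textbf{Antisymmetry.} Suppose $a\leq b$ is witnessed by $x$ and $b\leq a$ by $y$, so that
\[
a=xa=xb=ax^{*}=bx^{*},\qquad b=yb=ya=by^{*}=ay^{*}.
\]
Substituting $a=ax^{*}$ into $b=ya$ yields $b=y(ax^{*})=(ya)x^{*}=bx^{*}$. Combined with $a=bx^{*}$ from the first bundle, this forces $a=b$.

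\smallskip

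\noindent\textbf{Transitivity.} Assume $a\leq b$ via $x$ and $b\leq c$ via $y$. The natural candidate is $z=xy$, so that $z^{*}=y^{*}x^{*}$. The key observation, which makes everything fall into place, is the ``sandwich'' identity
\[
a=xbx^{*},\qquad b=ycy^{*},
\]
read off directly from the data, e.g.\ $xbx^{*}=(xb)x^{*}=ax^{*}=a$. Substituting the second into the first gives
\[
a=x(ycy^{*})x^{*}=(xy)c(y^{*}x^{*})=zcz^{*}.
\]
Three of the four equalities required for $a\leq c$ are then immediate: $zc=x(yc)=xb=a$, $cz^{*}=(cy^{*})x^{*}=bx^{*}=a$, and, from $ya=y(bx^{*})=(yb)x^{*}=bx^{*}=a$, one gets $za=x(ya)=xa=a$.

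\smallskip

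The main obstacle is the remaining equality $az^{*}=a$. A direct computation $ay^{*}x^{*}=(bx^{*})y^{*}x^{*}=\cdots$ tends to run in circles, because the $*$-factors cannot be commuted and no simple substitution reshuffles their interleaving. The trick I would rely on is precisely the sandwich relation $a=zcz^{*}$ already established: since $zc=a$, one reads off
\[
a=zcz^{*}=(zc)z^{*}=az^{*},
\]
which completes the verification of $a\leq c$. This single sandwich manoeuvre is the one moment that requires genuine thought; the rest of the proof is pure substitution.
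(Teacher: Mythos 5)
Your proposal is correct and follows essentially the same route as the paper: $x=1$ witnesses reflexivity, antisymmetry follows by substituting into $b=ya$, and transitivity uses the witness $xy$ with the same substitution computations (your ``sandwich'' identity $a=xbx^{*}=(xy)c(xy)^{*}$ is just a repackaging of the paper's verification of $a(xy)^{*}=a$).
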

\begin{proof}Reflexive: for $x=1$, we have
 $a=xa=ax^*$. Hence $a\leq a, \forall a\in R$.\\
 Antisymmetric: Let $a\leq b$ and $b\leq a$. Then there
 exist $x,y \in R$ such that $a=xa=xb=ax^*=bx^*$ and
 $b=yb=ya=by^*=ay^*$, hence $b=ya=y(bx^*)=bx^*=a$.\\
Transitive: Let $a\leq b$ and $b\leq c$. Hence there
exist $x,y \in R$ such that $a=xa=xb=ax^*=bx^*$ and
$b=yb=yc=by^*=cy^*$. Then $(xy)a=(xy)(bx^*)=x(yb)x^*=xbx^*=ax^*=a,
(xy)c=x(yc)=xb=a,
a(xy)^*=(xb)(y^*x^*)=x(by^*)x^*=xbx^*=ax^*=a$ and
$c(xy)^*=c(y^*x^*)=(cy^*)x^*=bx^*=a$. Hence $a\leq c$.
 \end{proof}

{\bf Henceforth $\mathbf{R}$ denotes a $*$-ring with unity and we say that $a\leq b$ \textit{through} $x$ whenever $a=xa=xb=ax^*=bx^*$}.

 \begin{note} If we restrict this partial order to the set of projections in a Rickart $*$-ring,
 then it coincides with the usual partial order for projections given in Berberian \cite{skb}.
\end{note}
 
 \begin{rem}This order is different from $*$-order.
 	
\vspace{.1cm}

\noindent
  For, let $a={\displaystyle\left[%
\begin{array}{cc}
  1 & 2 \\
  1 & 2 \\
\end{array}%
\right], b=\left[%
\begin{array}{cc}
  2 & 0 \\
  0 & 1 \\
\end{array}%
\right]}\in R=M_2(\mathbb{Z}_3)$ with transpose as an involution.\\ 

\vspace{.1cm}
\noindent Then ${\displaystyle a^*a=\left[%
\begin{array}{cc}
  2 & 1 \\
  1 & 2 \\
\end{array}%
\right]}=a^*b$, $aa^*={\displaystyle\left[%
\begin{array}{cc}
  2 & 2 \\
  2 & 2 \\
\end{array}%
\right]=ba^*}$, hence $a\underset{*}{\leqslant} b$.\\

\vspace{.1cm}
\noindent
Next let $x={\displaystyle\left[%
\begin{array}{cc}
  x_1 & x_2 \\
  x_3 & x_4 \\
\end{array}%
\right]}$ be such that $a=xa=xb=ax^*=bx^*$. Then $a=xa$ gives

\vspace{.1cm}
${\displaystyle \left[%
\begin{array}{cc}
  1 & 2 \\
  1 & 2 \\
\end{array}%
\right]}={\displaystyle\left[%
\begin{array}{cc}
  x_1 & x_2 \\
  x_3 & x_4 \\
\end{array}%
\right]}{\displaystyle\left[%
\begin{array}{cc}
  1 & 2 \\
  1 & 2 \\
\end{array}%
\right]}={\displaystyle\left[%
\begin{array}{cc}
  x_1+x_2 & 2(x_1+x_2) \\
  x_3+x_4 & 2(x_3+x_4) \\
\end{array}%
\right]}$ and $a=ax^*$ gives \\ 

\vspace{.1cm}
${\displaystyle \left[%
\begin{array}{cc}
  1 & 2 \\
  1 & 2 \\
\end{array}%
\right]=\left[%
\begin{array}{cc}
  1 & 2 \\
  1 & 2 \\
\end{array}%
\right]\left[%
\begin{array}{cc}
  x_1 & x_3 \\
  x_2 & x_4 \\
\end{array}%
\right]=\left[%
\begin{array}{cc}
  x_1+2x_2 & x_3+2x_4 \\
  x_1+2x_2 & x_3+2x_4 \\
\end{array}%
\right]}$.\\

On comparing, we get $x_1+x_2=1=x_1+2x_2$, which gives $x_2=0, x_1=1$.
Similarly $x_3+x_4=1,x_3+2x_4=2$, giving $x_3=1, x_4=0$, {\it i.e.}, 
\vspace{.1cm}
$x=\left[%
\begin{array}{cc}
  1 & 0 \\
  1 & 0 \\
\end{array}%
\right]$. But $xb\neq a$. Hence $a\nleq b$. On the Other hand, if $c=\left[%
\begin{array}{cc}
  1 & 1 \\
  1 & 1 \\
\end{array}%
\right], d=\left[%
\begin{array}{cc}
  0 & 1 \\
  1 & 1 \\
\end{array}%
\right]$ and $y=\left[%
\begin{array}{cc}
  0 & 1 \\
  0 & 1 \\
\end{array}%
\right]$. Then  $c \leq d$ through $y$. While $c^*c\neq c^*d$, hence $c\underset{*}{\nleqslant} d$. Thus these two partial orders (natural partial order and $*$-order) are distinct.
\end{rem}

\begin{prop} Let $R$ be a commutative $*$-ring. Then $a\leq b$ implies $a\underset{*}{\leqslant} b$.
\end{prop}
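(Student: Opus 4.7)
The plan is to unpack both sides of the claimed inequalities directly from the defining equations of $a\leq b$, and show that commutativity collapses everything.

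Assume $a\leq b$ through $x$, so that
\[
a = xa = xb = ax^* = bx^*.
\]
First I would take adjoints of these four equalities to obtain $a^* = a^*x^* = b^*x^* = xa^* = xb^*$. Combined with commutativity, the crucial identities I want to extract are $x^*a = ax^* = a$ and $x^*b = bx^* = a$, together with $a^* = b^*x^* = x^*b^*$.

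Next I would compute $a^*a$ and $a^*b$ separately. Using $a^* = b^*x^*$, I get
\[
a^*a = b^*x^*a = b^*(x^*a) = b^*a,
\]
since $x^*a = ax^* = a$ by commutativity and the relation $a = ax^*$. Similarly,
\[
a^*b = b^*x^*b = b^*(x^*b) = b^*a,
\]
since $x^*b = bx^* = a$. Hence $a^*a = a^*b$, which is the first half of $a\underset{*}{\leqslant} b$.

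For the second half, $aa^* = ba^*$, I would simply observe that in a commutative ring $aa^* = a^*a$ and $ba^* = a^*b$, so this equality is identical to the one already established. There is no real obstacle; the only point to be careful about is that commutativity of the underlying ring is used freely (to move $x^*$ past $a$ and past $b$), whereas the involution itself need not be the identity, so the expressions $a^*a$ and $a^*b$ genuinely differ as formal products until commutativity is invoked.
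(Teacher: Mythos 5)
Your proof is correct and follows essentially the same route as the paper: both unpack the defining relations of $a\leq b$ through $x$, use commutativity to identify $x^*b$ with $bx^*=a$ (and $x^*a$ with $ax^*=a$) so that $a^*a=a^*b$, and then obtain $aa^*=ba^*$ as an immediate consequence of commutativity. The only cosmetic difference is that you route both products through $a^*=b^*x^*$ and the common value $b^*a$, whereas the paper expands $a^*b=(xa)^*b$ directly.
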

\begin{proof} Let $a\leq b$. Then there exists $x\in R$ such that
$a=xa=xb=ax^*=bx^*$. This yields
$a^*b=(xa)^*b=a^*x^*b=a^*(bx^*)=a^*a$ and  since $R$ is commutative, we get $aa^*=ba^*$. Hence $a\underset{*}{\leqslant} b$.
 \end{proof}
 Note that the converse of the above statement is not true in general. Since $*$-order is not a partial order on $\mathbb{Z}_{12}$ with identity mapping as an involution(as $6^*6=6^*0=0^*0$).
 
In the next result, we provide properties of the natural partial order.
\begin{thm}\label{t1}Let $R$ be a $*$-ring with unity. Then the following statements hold.
\begin{enumerate}
\item 0 is the least element of the poset $R$.
\item $a \leq e, a\in R, e\in P(R)$(set of projections in $R$) implies $a\in P(R)$.
\item $a\leq b$ if and only if $a^*\leq b^*$.
\item If $a\leq b$, then $r(b)\subseteq r(a)$ and $l(b)\subseteq l(a)$.
\item $a \leq b,$ $b$ regular ({\it i.e.}, $bb'b=b$, for some $b'\in R$) implies $a$ is regular.
\item $a\leq b$ and $a$ has right(resp. left) inverse
imply $a=b$, {\it i.e.}, every element having right(resp. left) inverse is
maximal. 
\item If $a\leq b$. Then $ac\leq bc$ if and only if $ca\leq
cb, \forall c\in R$.
\end{enumerate}
\end{thm}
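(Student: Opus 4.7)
The plan is to verify each of the seven items by direct manipulation of the four equations $a=xa=xb=ax^*=bx^*$ witnessing $a\leq b$ through $x$. For (1), taking $x=0$ makes every defining equation read $0=0$, so $0\leq a$ for each $a\in R$. For (2), the hypothesis $a=xe=ex^*$ combined with $e^*=e=e^2$ yields $a^*=(xe)^*=e^*x^*=ex^*=a$ and $a^2=(xe)(ex^*)=xe^2x^*=xex^*=ax^*=a$, so $a\in P(R)$. For (3), applying $*$ to each of $a=xa$, $a=xb$, $a=ax^*$, $a=bx^*$ gives $a^*=a^*x^*=xa^*=b^*x^*=xb^*$, which is exactly $a^*\leq b^*$ through the same $x$; the converse follows by applying $*$ once more.

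For (4), I exploit $a=xb$ and $a=bx^*$ directly: if $by=0$ then $ay=xby=0$, and if $yb=0$ then $ya=ybx^*=0$. For (5), the calculation $ab'a=(xb)b'(bx^*)=x(bb'b)x^*=xbx^*=ax^*=a$ shows that the inner inverse $b'$ of $b$ is also an inner inverse of $a$. For (6), when $ay=1$, multiplying $xa=a$ on the right by $y$ forces $x=(xa)y=ay=1$ and hence $a=xb=b$; the left-inverse case mirrors this using $a=ax^*$.

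For (7), I begin by noting that (3) makes the statement $*$-self-dual: since $ac\leq bc$ iff $c^*a^*\leq c^*b^*$, and $ca\leq cb$ iff $a^*c^*\leq b^*c^*$, one direction of the iff transforms into the other under $*$, so it suffices to prove, say, that $a\leq b$ together with $ac\leq bc$ implies $ca\leq cb$. Assume $a\leq b$ through $x$ and $ac\leq bc$ through $y$, and seek $z$ witnessing $ca\leq cb$. The right-hand equations $caz^*=ca$ and $cbz^*=ca$ are satisfied by $z=x$ purely from $ax^*=a=bx^*$, since $cax^*=c(ax^*)=ca$ and $cbx^*=c(bx^*)=ca$. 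The remaining two left-hand conditions $zca=ca$ and $zcb=ca$ are not delivered by $z=x$ in general, and the witness $y$ of $ac\leq bc$ must be brought in to correct this. Constructing a single $z$, built from $x$ and $y$, that satisfies all four defining equations simultaneously is the delicate point I anticipate as the main technical obstacle.
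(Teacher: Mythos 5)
Items (1)--(6) of your proposal are correct and essentially identical to the paper's proof; you even supply explicit arguments for (1) and (4), which the paper dismisses as ``obvious'' (your witness $x=0$ for (1) and the computations $ay=xby=0$, $ya=ybx^*=0$ for (4) are exactly what is needed).

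Item (7), however, is left with a genuine gap, and the gap comes from attacking the wrong formulation. You try to prove, for a \emph{fixed} $c$, that $a\leq b$ through $x$ and $ac\leq bc$ through $y$ force $ca\leq cb$, and you correctly observe that you cannot assemble a witness $z$ from $x$ and $y$; you then stop. The paper's statement (as its proof and its later use in the ``Condition of Compatibility'' theorem make clear) is the globally quantified one: right-compatibility of $\leq$ with multiplication, holding for \emph{all} pairs $a\leq b$ and \emph{all} $c$, is equivalent to left-compatibility for all pairs and all $c$. Under that reading no witness construction is needed at all: given $a\leq b$, item (3) gives $a^*\leq b^*$; the (global) right-compatibility hypothesis applied to the pair $a^*\leq b^*$ and the element $c^*$ gives $a^*c^*\leq b^*c^*$; applying (3) once more gives $(a^*c^*)^*\leq (b^*c^*)^*$, i.e.\ $ca\leq cb$. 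Note that this argument essentially requires applying the hypothesis to the \emph{different} pair $(a^*,b^*)$ and the \emph{different} element $c^*$, which is exactly what your fixed-$(a,b,c)$ setup forbids --- that is why your search for $z$ stalls. Your opening duality observation (that the two directions of the equivalence are interchanged by $*$) is sound and is the same mechanism the paper uses; you just need to push it one step further and let it do all the work, rather than reverting to a witness-by-witness computation.
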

\begin{proof}$(1)$ Obvious.\\
$(2)$ Suppose $a\leq e, e\in P(R)$. Let $a\leq e$ through $x$, for some $x\in
R$, {\it i.e.}, $a=xa=xe=ax^*=ex^*$. This yields $a^2=xe.ex^*=xex^*=ax^*=a$. Also,
$a^*=(xe)^*=ex^*=a$, hence $a\in P(R).$\\
$(3)$ Let $a\leq b$. Then $a=xa=xb=ax^*=bx^*$, for some $x\in
R$. Hence $a^*=xa^*=a^*x^*=b^*x^*=xb^*$ which gives $a^*\leq b^*$. The Converse follows from the fact that $(a^*)^*=a$.\\
$(4)$ Obvious.\\
$(5)$ Suppose $a\leq b$ and $b$ is regular, {\it i.e.}, $bb'b=b$, for some $b'\in R$. Let $a=xa=xb=ax^*=bx^*$, for some
$x\in R$. Then  $a=ax^*=xbx^*=xbb'bx^*=(xb)b'(bx^*)=ab'a$. Hence $a$ is regular.\\
$(6)$ Let $c\in R$ be such that $ac=1$(resp. $ca=1)$ and $a\leq b$. Let $a=xa=xb=ax^*=bx^*$, for some $x\in R$. Then
$a=xa$(resp. $a=ax^*)$ gives $ac=xac$(resp.
$ca=cax^*)$. Thus $1=x$(resp. $1=x^*)$. Hence $a=b$, i.e, $a$ is a maximal element.\\ 
$(7)$ Suppose $a\leq b$ and $ac\leq bc,$ $\forall c\in R$. Since $a\leq b$, by  ($3$) above, we have
$a^*\leq b^*$ giving $a^*c^*\leq b^*c^*$, which further yields $(a^*c^*)^*\leq (b^*c^*)^*$,  {\it i.e.},  $ca\leq
 cb$ and conversely.
\end{proof}

 In a poset $P$, the {\it principal ideal} generated by $a\in P$ is given by $(a]=\{x\in P\colon x\leq a\}$.
\begin{prop}If $a$ and $b$ are central elements of a
 $*$-ring $R$ which generate the same ideals of a ring $R$, then there is a order isomorphism between the set of elements $\leq a$ and the set of elements $\leq b$.
 \end{prop}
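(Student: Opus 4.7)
The strategy is to use centrality to turn the ideal equality into explicit left and right multipliers, establish an annihilator equality as the crucial auxiliary fact, and then define the order isomorphism by a single multiplication. Because $a$ is central, the two-sided ideal it generates coincides with $Ra = aR$, and likewise for $b$. So the hypothesis gives $r, s \in R$ with $a = rb = br$ and $b = sa = as$. The first step I would record is the annihilator equality $r(a) = l(a) = r(b) = l(b)$: centrality gives the equalities within each side, while $ca = 0$ implies $cb = c(sa) = c(as) = (ca)s = 0$, and symmetrically.

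Next I would define $\phi\colon (a] \to (b]$ by $\phi(x) = sx$, with candidate inverse $\psi\colon (b] \to (a]$ given by $\psi(y) = ry$. To verify $\phi(x) \in (b]$, let $x \leq a$ through $t$, so $x = tx = ta = xt^* = at^*$. Using centrality, $\phi(x) = sx = s(ta) = (sa)t = bt = tb$ and, symmetrically, $\phi(x) = bt^* = t^*b$. The claim is that $\phi(x) \leq b$ through the same $t$; after these rewritings, the four defining identities reduce to the annihilator statements $(t^2 - t)b = 0$ and $(tt^* - t)b = 0$, whose $a$-analogues are immediate consequences of $x = tx$ and $x = xt^*$ combined with centrality of $a$, and the passage from $a$ to $b$ is exactly the annihilator equality above. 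The same argument with the roles swapped shows $\psi$ lands in $(a]$.

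To conclude, $\psi \circ \phi = \mathrm{id}$: writing $x = ta = at$, one has $\psi(\phi(x)) = rs(at) = (rsa)t = (rb)t = at = x$, since $rb = a$; similarly $\phi \circ \psi = \mathrm{id}$. For order preservation, if $x_1 \leq x_2$ in $(a]$ via $v$, I would show $\phi(x_1) \leq \phi(x_2)$ via the same $v$. After rewriting $sx_i = t_i b = bt_i$ (where $x_i \leq a$ through $t_i$), each of the four required identities reduces to an annihilator statement of the form $(\,\cdot\,)b = 0$ whose $a$-analogue follows from the corresponding equation for $x_i$, and the annihilator equality finishes. The main obstacle is purely bookkeeping: $r, s, t, v$ need not commute with one another, so centrality of $a$ and $b$ has to be used repeatedly to move $a$ and $b$ past them, and the annihilator equality of the first step is exactly what absorbs the remaining non-commutativity.
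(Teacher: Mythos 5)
Your proof is correct and follows essentially the same route as the paper: both define the isomorphism by multiplying by the transition element between $a$ and $b$ (your $\phi(x)=sx$ equals the paper's $\phi(x)=xt$ once centrality is used to rewrite $x=x_1a$ as $x_1b$), verify it lands in $(b]$ using the same witness, and exhibit multiplication by the reverse transition element as a two-sided inverse. Your explicit isolation of the annihilator equality $l(a)=l(b)$, $r(a)=r(b)$ is a cleaner way to organize the substitutions that the paper carries out inline, but it is the same argument.
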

\begin{proof}Let $a$ and $b$ are central elements with $Ra=Rb$. Then $a=bs, b=at$, for some $s,t\in R$. Denote $(a]=\{x\in R\colon x\leq a\}$. Define $\phi : (a] \rightarrow (b]$ by $\phi(x)=xt$. We claim that $xt\leq b, \forall x\in (a]$.
 As $x\leq a$, we have $x=x_1x=x_1a=ax_1^*=xx_1^*$, for some $x_1\in R$. Then
 $x_1xt=xt$,
 $xtx_1^*=x_1atx_1^*=x_1bx_1^*=x_1x_1^*b=x_1x_1^*at=x_1ax_1^*t=x_1xt=xt$,
$x_1b=x_1at=ax_1t=xt$ and $bx_1^*=x_1^*b=x_1^*at=ax_1^*t=xt$.
Hence $xt\leq b$. Now, let $x,y\in (a]$ be such that
$x=x_1x=x_1a=ax_1^*=xx_1^*$ and $y=y_1y=y_1a=ay_1^*=yy_1^*$, for
some $x_1,y_1\in R$. Then
 $\phi(x)=\phi(y)$ if and only if $xt=yt$ if and only if $x_1at=y_1at$ if and only if $x_1b=y_1b $ if and only if $x_1a=y_1a $
 if and only if $x=y$. Hence $\phi$ is well defined and one-one.
 Let $z\in (b]$. Then as above $zs\in (a]$ and $z=z_1b=z_1z=bz_1^*=zz_1^*$, for some $z_1\in
 R$. Also $\phi(zs)=zst=z_1bst=z_1at=z_1b=z$, {\it i.e.}, $\phi$ is a
 bijection.
 
 Now, suppose that $x,y\in(a]$ with $x\leq y$. Then $x=x_1x=x_1a=ax_1^*=xx_1^*$, $y=y_1y=y_1a=ay_1^*=yy_1^*$ and $x=x_2x=x_2y=yx_2^*=xx_2^*$,
 for some $x_1,x_2,y_1\in R$.
Next, $(x_1x_2)xt=x_1xt=xt$, $(x_1x_2)yt=x_1xt=xt$,
$xt(x_1x_2)^*=xtx_2^*x_1^*=x_1atx_2^*x_1^*=x_1bx_2^*x_1^*=x_1x_2^*x_1^*b=x_1x_2^*x_1^*at=ax_1x_2^*x_1^*t=xx_2^*x_1^*t=xx_1^*t=xt$
and
$yt(x_1x_2)^*=ytx_2^*x_1^*=y_1atx_2^*x_1^*=y_1bx_2^*x_1^*=y_1x_2^*x_1^*b=y_1x_2^*x_1^*at=y_1ax_2^*x_1^*t=yx_2^*x_1^*t=xx_1^*t=xt$. Consequently  $\phi(x)\leq \phi(y)$. Hence $\phi$ is an order isomorphism.
In fact, $\psi:(b]\rightarrow (a]$ defined by $\psi(y)=ys$, works
as an inverse of $\phi$.
 \end{proof}
 
 \begin{theorem}[Condition of Compatability] If $xa=ax^*$,  $\forall a,x\in R$, then the natural
 	partial order is compatible with multiplication, {\it i.e.}, $a\leq b$ implies $ca\leq cb$, for all $c\in R$.
 \end{theorem}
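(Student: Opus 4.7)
The plan is to use the \emph{same} witness. I claim that if $a \leq b$ through $x$ (so that $a = xa = xb = ax^* = bx^*$), then $ca \leq cb$ through the same element $x$, for every $c \in R$. This reduces the task to verifying the four defining identities
\[ ca = x(ca) = x(cb) = (ca)x^* = (cb)x^*. \]

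First I would dispose of the two right-hand equalities, which actually do not use the compatibility hypothesis at all: by associativity, $(ca)x^* = c(ax^*) = ca$ because $ax^* = a$, and $(cb)x^* = c(bx^*) = ca$ because $bx^* = a$.

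For the two remaining equalities I would invoke the standing hypothesis $xu = ux^*$, valid for every $u \in R$. Taking $u = ca$ yields $x(ca) = (ca)x^*$, and taking $u = cb$ yields $x(cb) = (cb)x^*$; by the previous paragraph, both right-hand sides equal $ca$. Stringing the four identities together shows $ca \leq cb$ through $x$, as required.

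There is no real obstacle. The hypothesis is tailored exactly to convert left multiplication by $x$ into right multiplication by $x^*$, which can then slide past the factor $c$ and allow re-use of the original relations $ax^* = a$ and $bx^* = a$. Once this observation is in place, the proof is a four-line verification, and it uses nothing beyond the hypothesis and the definition of the natural partial order. Combined with part $(7)$ of the earlier theorem, this also gives $ac \leq bc$ under the same hypothesis, so the order is in fact two-sided compatible with multiplication.
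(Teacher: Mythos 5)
Your proof is correct and rests on exactly the same idea as the paper's: keep the original witness $x$ and use the hypothesis $xu=ux^*$ to turn left multiplication by $x$ into right multiplication by $x^*$, which then slides past $c$. The only difference is cosmetic --- you verify $ca\leq cb$ directly, while the paper verifies $ac\leq bc$ and then appeals to part (7) of Theorem \ref{t1}; your route is marginally more direct and equally valid.
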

 \begin{proof}In view of Theorem \ref{t1} (7), it is enough to show that $a\leq b$ implies $ac\leq bc, \forall c\in
 	R$. Let $a\leq b$, then there exists $x\in R$ such that  $a=xa=xb=ax^*=bx^*$.
 	Hence $ac=xac=xbc=ax^*c=bx^*c$, {\it i.e.}, $ac=xac$,
 	$acx^*=ca^*x^*=c(xa)^*=ca^*=ac$. Also $bcx^*=cb^*x^*=c(xb)^*ca^*=ac$, hence $ac\leq bc$.
 \end{proof}

\begin{defn}\label{d2} Two elements $a$ and $b$ in a $*$-ring $R$
are {\it orthogonal}, denoted by $a\perp b$, if there exists $x\in R$ such that
$xa=a=ax^*$ and $xb=0=bx^*$.
\end{defn}

The orthogonality relation in a $*$-ring has the following properties.
\begin{thm}\label{t2}
For elements $a,b,c$ in a $*$-ring $R$ , the following statements hold.
\begin{enumerate}
\item $a\perp a$ implies $a=0$.
\item $a\perp b$ if and only if $b\perp a$ if and only if $a\perp (-b)$.
\item $a\perp b$, $c\leq a$ imply $c\perp b$.
\item $a\perp b $ if and only if $a\leq a-b$.
\item $a\leq b$ implies $b-a\leq b$ and $b-a\perp a$.
\item If $a\perp b$, then $a\wedge b=0$ and $a+b$ is an upper bound of both $a,b$.
\item $a\perp b$, $(a+b)\perp c$ imply $a\perp (b+c)$.
\end{enumerate}
\end{thm}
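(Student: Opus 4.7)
The plan is to prove the seven items in the stated order, because each one builds naturally on the preceding parts. The unifying technique is that both orthogonality $a \perp b$ and inequality $a \leq b$ are carried by an explicit witness $x \in R$, so witnesses for derived statements can be constructed from these by taking the complement $1 - x$ and forming products.

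Parts (1), (2) and (4) are direct unpackings of Definitions \ref{d1} and \ref{d2}: $a \perp a$ forces $a = xa = 0$; if $x$ witnesses $a \perp b$, then $1 - x$ witnesses $b \perp a$ and the same $x$ also witnesses $a \perp (-b)$; and the equivalence in (4) follows by rewriting $x(a - b) = xa - xb$ in both directions. For (5), the complement $y = 1 - x$ of the witness $x$ for $a \leq b$ simultaneously verifies the four identities for $b - a \leq b$ and gives $ya = 0 = ay^*$, so $(b - a) \perp a$.

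For (3), suppose $a \perp b$ through $x$ and $c \leq a$ through $y$. The key observation is that $x$ itself already witnesses $c \perp b$: from $c = ay^*$ and $xa = a$ we get $xc = (xa)y^* = ay^* = c$, and symmetrically $cx^* = c$, while $xb = 0 = bx^*$ carry over unchanged. For (6), the witness $x$ of $a \perp b$ gives $a \leq a + b$ (since $x(a+b) = xa + xb = a$) and $1 - x$ gives $b \leq a + b$, so $a + b$ is an upper bound of both. For $a \wedge b = 0$, take $c$ below both; by (3) we obtain $c \perp b$ with some witness $w$, and combining $wc = c$ with $c = bv^*$ (where $v$ witnesses $c \leq b$) yields $c = (wb)v^* = 0$.

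For (7), rather than composing the two given witnesses, I route through the partial order. By (6), $a \perp b$ gives $a \leq a + b$, and $(a + b) \perp c$ gives $a + b \leq (a+b) + c = a + b + c$; transitivity of $\leq$ then yields $a \leq a + b + c$. Now combining (2) and (4) shows, for every $u \in R$, that $a \perp u \Leftrightarrow a \perp (-u) \Leftrightarrow a \leq a - (-u) = a + u$; applied with $u = b + c$ this converts the inequality $a \leq a + b + c$ into the desired $a \perp (b + c)$. The hard part will be exactly this step: the tempting direct choice $z = xy$ fails because $y$ is only required to fix $a + b$ as a whole, so $xya = a$ and $xyb = 0$ cannot be separated algebraically, and it is the order-theoretic detour that makes the argument go through cleanly.
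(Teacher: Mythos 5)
Your proposal is correct and follows essentially the same strategy as the paper: explicit witnesses, the complement $1-x$ for parts (2), (5) and (6), and routing part (7) through the partial order via $a\leq a+b\leq a+b+c$ and the equivalence of $a\perp u$ with $a\leq a+u$. The only (harmless) deviations are cosmetic: in (3) you reuse the witness $x$ directly where the paper takes the product $yx$, and in (7) you close with (2) and (4) where the paper invokes (5); both variants check out.
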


\begin{proof}$(1), (2)$ Obvious.\\
$(3)$ Suppose that $a\perp b$ and $c\leq a$. Let $x, y\in R$ be such that $a=xa=ax^*$, $xb=0=bx^*$ and  $c=yc=cy^*=ya=ay^*$. Then $(yx)c=yxay^*=yay^*=cy^*=c$. Similarly, $c(yx)^*=c$. On the other hand, $(yx)b=0$ and $b(yx)^*=0$. Consequently, $c\perp b$.\\
$(4)$ Suppose $a$ and $b$ are orthogonal. Let $x\in R$ be such that $a=xa=ax^*$ and $xb=0=bx^*$. Then
$a=x(a-b)=(a-b)x^*=xa=ax^*$, hence $a\leq a-b$. Conversely, suppose that  $a\leq a-b$. Let $x\in R$ be such that $a=x(a-b)=(a-b)x^*=xa=ax^*$. Then $a=x(a-b)$ and $a=xa$ gives
$xb=0$. Similarly, $bx^*=0$. Hence $a\perp b$.\\
$(5)$ Let $x\in R$ be such that $a=xa=xb=ax^*=bx^*$. Then $(1-x)(b-a)=b-a-xb+xa=b-a-a+a=b-a$,
$(1-x)b=b-xb=b-a$, $b(1-x)^*=b-bx^*=b-a$ and
$(b-a)(1-x)^*=b-a-bx^*-ax^*=b-a-a+a=b-a$. Hence $b-a\leq b$. Also
$(1-x)(b-a)=b-a=(b-a)(1-x)^*$ and $(1-x)a=0=a(1-x)^*$. Hence $b-a \perp a$.\\
$(6)$ Suppose $a\perp b$ and $x$ be such that
$xa=a=ax^*, xb=0=bx^*$. Let $c\leq a$ and $c\leq b$ {\it i.e.}
$c=x_1c=x_1a=cx_1^*=ax_1^*$ and $c=x_2c=x_2b=cx_2^*=bx_2^*$, for some $x_1, x_2 \in R$. Then
$x_1a=c=x_2b$ gives $c=x_1a=x_2b=x_1ax^*=x_2bx^*=0$. Hence $a\wedge b=0$. From $(2)$ and $(4)$, we have $a\leq a+b$ and $b=(a+b)-a\leq a+b$.\\
$(7)$ Suppose that $a\perp b$, $(a+b)\perp c$. From $(6)$, we have $a\leq a+b$ and $a+b\leq a+b+c$. This gives $a\leq a+b+c$. Then from $(5)$, we get $b+c=(a+b+c)-a\leq a+b+c$ and $(b+c)\perp a$, as required.
\end{proof}

A poset $P$ with 0 is called {\it sectionally semi-complemented} (in brief SSC) if, for $a,b\in P$, $a< b$, there exists
an element $c\in P$ such that $0<c< b$ and $\{a,c\}^l=\{	0\}$, where $\{a,c\}^l=\{x\in P\colon x\leq a\textnormal{ and } x\leq c\}$. Thus from $(5)$ and $(6)$ of Theorem \ref{t2} , we have the following result.
 \begin{thm}
 	Let $R$ be a $*$-ring. Then the poset $(R, \leq)$ is an SSC poset.
 \end{thm}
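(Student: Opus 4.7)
The plan is to exhibit, for any pair $a<b$, the candidate $c:=b-a$ and to verify the two required conditions directly from Theorem \ref{t2}. Concretely, I will first read off from part (5) that this $c$ lies weakly below $b$ and is orthogonal to $a$, and then convert that orthogonality into the meet condition $\{a,c\}^l=\{0\}$ via part (6).

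In more detail, given $a<b$, Theorem \ref{t2}(5) simultaneously delivers $b-a\le b$ and $(b-a)\perp a$, that is, $c\le b$ and $c\perp a$. Because the inequality $a<b$ is strict, $c=b-a$ is nonzero, giving $0<c$. Using the symmetry of $\perp$ (Theorem \ref{t2}(2)) we have $a\perp c$, whence Theorem \ref{t2}(6) yields $a\wedge c=0$, which unpacks to $\{a,c\}^l=\{0\}$. Thus $c$ is a nonzero element below $b$ whose only common lower bound with $a$ is $0$, as required.

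The only point that calls for a second glance is the strict upper bound $c<b$ appearing in the stated definition: we have $c=b$ iff $b-a=b$ iff $a=0$, so as long as $a\neq 0$ the inequality $c<b$ is automatic. For the degenerate case $a=0$, the condition $\{a,c\}^l=\{x:x\le 0\text{ and }x\le c\}=\{0\}$ is satisfied for \emph{any} $c\le b$, so the definition is met on the standard reading ``$0<c\le b$'' by simply taking $c=b$; the construction $c=b-a$ thus covers every case uniformly and no further obstacle arises.
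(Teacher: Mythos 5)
Your proof is correct and takes essentially the same route as the paper, which derives this theorem directly from parts (5) and (6) of Theorem \ref{t2} using the candidate $c=b-a$. Your additional attention to the strictness of $c<b$ and the degenerate case $a=0$ only makes explicit what the paper leaves implicit.
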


A ring is called  an {\it abelian ring} if all of its idempotents are central.
\begin{prop}\label{p2}In an abelian Rickart $*$-ring $R$, the following statements are equivalent.
\begin{enumerate}
\item[i)] $a\leq b$.
\item[ii)]  There exists a projection $e$ such that
$a=ae=be$.
\item [iii)] $ab=a^2=ba$.
\end{enumerate}
\end{prop}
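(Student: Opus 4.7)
The plan is to prove (i)$\Rightarrow$(ii)$\Rightarrow$(iii)$\Rightarrow$(i), with the projection $e := RP(a)$ as the unifying object. Before starting I would record the standard fact that in an abelian Rickart $*$-ring $RP(a) = LP(a)$: if $e = RP(a)$, then centrality of $e$ gives $ea = ae = a$, so $LP(a) \le e$ by the minimality characterization of $LP(a)$, and a symmetric argument for $f = LP(a)$ yields $e \le f$. Consequently $r(\{a\}) = (1-e)R$ and $l(\{a\}) = R(1-e)$.

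For (i)$\Rightarrow$(ii), assume $a \le b$ through $x$. The identity $(1-x)a = 0$ places $1-x$ in $l(\{a\})=R(1-e)$, and a right-multiplication by $e$ yields $xe = e$; analogously $a(1-x^*) = 0$ gives $ex^* = e$. From $xb = a = xa$ I get $x(b-a) = 0$, and since $e$ is central $e = ex$, so
\[
e(b - a) \;=\; (ex)(b - a) \;=\; e\bigl(x(b-a)\bigr) \;=\; 0,
\]
i.e., $eb = ea = a$, whence $be = a$ by centrality; combined with $ae = a$ this is (ii). The implication (ii)$\Rightarrow$(iii) is then immediate from centrality and $be = a = ae$: $ab = a(eb) = a(be) = a \cdot a = a^2$, and similarly $ba = (be)a = a^2$.

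For (iii)$\Rightarrow$(i) I again take $x = e = RP(a)$, so $e^* = e$ and the identities $xa = ea = a = ae = ax^*$ are automatic. The only nontrivial check is $eb = a$: compute
\[
a(b - ea) \;=\; ab - aea \;=\; a^2 - a^2 \;=\; 0,
\]
using $ae = a$ and the hypothesis $ab = a^2$. Hence $b - ea \in r(\{a\}) = (1-e)R$, so $e(b - ea) = 0$, which gives $eb = e^2 a = ea = a$; centrality then yields $be = a$ as well, so $x = e$ witnesses $a \le b$.

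The main obstacle I anticipate is the step inside (i)$\Rightarrow$(ii): recognizing that the annihilator identities coming from $a = xa$ and $a = ax^*$ force $xe = e = ex^*$, and then using centrality of $e$ to pivot from the equation $x(b-a) = 0$ to $e(b-a) = 0$. Once that pivot is available, the remaining implications are routine manipulations with a central projection, and the preliminary identification $RP(a) = LP(a)$ keeps the argument symmetric between left and right.
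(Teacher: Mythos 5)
Your proof is correct and follows essentially the same route as the paper: in each implication the Rickart condition produces a projection annihilating the relevant difference, and centrality of idempotents does the rest. The only cosmetic difference is that you consistently use $e=RP(a)$ (after noting $RP(a)=LP(a)$), whereas for i)$\Rightarrow$ii) the paper takes $e$ to be the projection generating $r(\{1-x\})$; both choices work and the underlying argument is the same.
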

\begin{proof}i)$\implies$ii) Suppose $a\leq b$, then there exists
$x\in R$ such that $a=xa=xb=ax^*=bx^*$. Since $ a=xa$, we have  $(1-x)a=0$. This gives $ a\in r\{1-x\}=eR$, for some projection $e\in
R$. Then $ea=a=ae$ and $(1-x)e=0$, {\it i.e.}, $ e=xe=ex^*$. Also,
$a=xb$ implies $ea=exb=xeb=eb$. Thus $a=ae=be$.\\
ii)$\implies$iii) Obvious.\\
iii)$\implies$i) Let $ab=a^2 $, {\it i.e.}, $a(b-a)=0$. Then there exists a
projection $e$ such that $a=ea=ea$ and $e(b-a)=0$, {\it i.e.}, $eb=ea=a$,
hence $a\leq b$.
\end{proof}
\begin{lem}\label{l6} If $R$ is an abelian Rickart $*$-ring, then $a\perp
b$ implies $a\wedge b=0$ and $a\vee b=a+b$.
\end{lem}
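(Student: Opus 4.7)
The meet statement $a\wedge b=0$ is already handed to us by Theorem \ref{t2}(6), which also tells us that $a+b$ is an upper bound of $\{a,b\}$. So the real content of the lemma is that $a+b$ is the \emph{least} upper bound, and the plan is to reduce this to the algebraic characterization of the order given by Proposition \ref{p2}.

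The first step I would carry out is to promote the witness $x$ of $a\perp b$ to a projection witness. By definition there exists $x\in R$ with $xa=a=ax^{*}$ and $xb=0=bx^{*}$. From $(1-x)a=0$ and the Rickart property we obtain a projection $e$ with $eR=r(\{1-x\})$; then $ea=a$ and $(1-x)e=0$, i.e.\ $xe=e$. Because $R$ is abelian, every idempotent, and in particular $e$, is central, so $ex=xe=e$ as well. Hence $eb=(ex)b=e(xb)=0$. Using centrality of $e$ one more time gives $ab=a(eb)\cdot\ldots$, more precisely $ab=(ea)b=e(ab)=a(eb)=0$ and similarly $ba=b(ea)=(be)a=0$. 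This is the computation I expect to be the technical heart of the argument; the rest is bookkeeping.

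Now let $c$ be any upper bound of $\{a,b\}$. By Proposition \ref{p2}(iii), $ac=a^{2}=ca$ and $bc=b^{2}=cb$. Using $ab=ba=0$, compute
\begin{equation*}
(a+b)c=ac+bc=a^{2}+b^{2}=a^{2}+ab+ba+b^{2}=(a+b)^{2},
\end{equation*}
and likewise $c(a+b)=ca+cb=a^{2}+b^{2}=(a+b)^{2}$. Applying Proposition \ref{p2}(iii) in the opposite direction yields $a+b\leq c$. Combined with the upper bound part of Theorem \ref{t2}(6), this shows $a+b$ is the join.

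The only delicate point in this plan is the passage from the abstract orthogonality witness $x$ to a central projection $e$ with $ea=a$ and $eb=0$; it is there that both the Rickart hypothesis (to extract $e$) and the abelian hypothesis (to force $e$ to commute with $x$) are used in an essential way, and without it one cannot conclude $ab=ba=0$ and hence cannot collapse $(a+b)^{2}$ to $a^{2}+b^{2}$.
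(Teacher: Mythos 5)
Your proof is correct, but it takes a different route from the paper's. The paper also starts from Theorem \ref{t2}(6) and, given an upper bound $c$, invokes Proposition \ref{p2}(ii) to get projections $e,f$ with $a=ea=ec$ and $b=fb=fc$; it then exhibits the explicit witness $y=ex+f(1-x)$ and checks the four defining identities $y(a+b)=(a+b)=( a+b)y^{*}=yc\,$ (etc.) directly against Definition \ref{d1}. You instead route everything through the equational characterization \ref{p2}(iii), which requires first establishing $ab=ba=0$ from the orthogonality witness; that preliminary step (extracting the central projection $e$ with $ea=a$, $eb=0$) is exactly the content of the paper's later lemma that $a\perp b$ iff $RP(a)RP(b)=0$ in an abelian Rickart $*$-ring, so you are in effect front-loading that result. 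What your version buys is that the join computation collapses to the one-line identity $(a+b)c=a^{2}+b^{2}=(a+b)^{2}=c(a+b)$, with no witness element to construct or verify; what the paper's version buys is that it needs only the projection form (ii) of Proposition \ref{p2} and never needs to know that $ab=0$. Both arguments use the Rickart and abelian hypotheses in the same essential places, and your proof is complete as stated.
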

\begin{proof}Suppose $a\perp b$. By Theorem \ref{t2} $(6)$,  $a\wedge b=0$ and $a+b$ is an upper
bound of $a$ and $b$. Let $a\leq c$ and $b\leq c$, then there
exist projections $e,f$ such that $a=ea=ec$ and $b=fb=fc$. Since
$a\perp b$,  there exists  $x\in R$ such that $xa=a=ax^*$
and $xb=0=bx^*$. Let $y=ex+f(1-x)$. Then
$y(a+b)=exa+exb+f(1-x)a+f(1-x)b=a+b$, $(a+b)y^*=a+b$,
$yc=exc+f(1-x)c=a+b$ and $cy^*=a+b$, {\it i.e.}, $a+b\leq c$. Thus
$a\vee b=a+b.$
\end{proof}

Before proceeding further, we need the
definition of orthomodular poset.  An {\it orthomodular
poset} is a partially ordered set $P$ with 0 and 1 equipped with a mapping $x\rightarrow x^{\perp}$
(called the {\it orthocomplementation}) satisfying the conditions.
\begin{enumerate}
\item[i)] $a\leq b\Rightarrow b^{\perp}\leq a^{\perp}$,
\item[ii)] $(a^{\perp})^{\perp}=a$ for all $a\in P$,
\item[iii)] $a\vee a^{\perp}=1$ and $a\wedge a^{\perp}=0,$ for all $a\in P$,
\item[iv)] $a\leq b^{\perp}$ implies that $a\vee b$ exists in $P$,
\item[v)] $a\leq b \Rightarrow b=a\vee (a\vee b^{\perp})^{\perp}$.
\end{enumerate}

The following result is essentially due to Marovt et al. {\cite[Theorem 1]{jm}}.
\begin{thm}\label{t3}Let $R$ be a Rickart $*$-ring. Then $a\underset{*}{\leqslant} b$ if and only if there exist projections $p$ and $q$ such that $a=pb=bq$.
\end{thm}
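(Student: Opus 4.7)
The plan is to prove the equivalence in two directions: for the forward implication I would take $p:=LP(a)$ and $q:=RP(a)$, and for the converse I would verify the two defining equalities of the $*$-order by direct computation.

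For the forward direction, assume $a\underset{*}{\leqslant} b$, so $a^*a=a^*b$ and $aa^*=ba^*$. Rewrite the second equality as $(b-a)a^*=0$. The defining property of the left projection of $a^*$ gives $ya^*=0 \Leftrightarrow y\cdot LP(a^*)=0$, so $(b-a)\,LP(a^*)=0$. A short bookkeeping check, using the uniqueness clauses in the definitions of $RP$ and $LP$ together with the self-adjointness of projections, establishes $LP(a^*)=RP(a)=q$. Hence $(b-a)q=0$, and since $aq=a$ this yields $bq=a$. A symmetric argument starting from $a^*(b-a)=0$, taking $*$, applying the defining property of $p=LP(a)$, and taking $*$ once more (using $p^*=p$), produces $pb=a$.

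For the converse, assume $a=pb=bq$ for some projections $p,q$. Using $a=pb$ one computes $a^*a=(pb)^*(pb)=b^*p^*pb=b^*pb=(pb)^*b=a^*b$, while using $a=bq$ one computes $aa^*=(bq)(bq)^*=bqq^*b^*=bqb^*=b(bq)^*=ba^*$; hence $a\underset{*}{\leqslant} b$.

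The main delicate point in the whole argument is the identity $LP(a^*)=RP(a)$, but it is immediate from uniqueness and self-adjointness of projections, so I do not expect a significant obstacle. The entire proof reduces to selecting $p=LP(a)$ and $q=RP(a)$ and translating the equational $*$-order conditions into annihilator statements these projections control.
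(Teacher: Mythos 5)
Your proof is correct. The paper itself gives no proof of this theorem --- it simply attributes the result to Marovt et al.\ \cite{jm} --- so there is nothing to compare against, but your argument is the standard and complete one: taking $q=RP(a)$ and $p=LP(a)$, translating $aa^*=ba^*$ and $a^*a=a^*b$ into the annihilator conditions $(b-a)RP(a)=0$ and $LP(a)(b-a)=0$ via the identity $LP(a^*)=RP(a)$ (which does follow from uniqueness and self-adjointness of projections, exactly as you say), and checking the converse by the direct computation $b^*pb=a^*a=a^*b$ and $bqb^*=aa^*=ba^*$, which notably does not even use the Rickart hypothesis.
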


Thus, from Proposition \ref{p2} and Theorem \ref{t3}, the natural partial order and $*$-order are equivalent on abelian Rickart $*$-rings. This leads to the following two results which are also proved independently by Janowitz \cite{j}.
\begin{theorem} Let $R$ is an abelian Rickart $*$-ring. Then
every interval $[0,x]$ is an orthomodular poset.
\end{theorem}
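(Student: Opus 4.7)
The plan is to equip $[0,x]$ with the orthocomplementation $a \mapsto a^\perp := x - a$ and verify the five defining axioms of an orthomodular poset one at a time, leaning on Proposition \ref{p2}, Theorem \ref{t2}, and Lemma \ref{l6}. Theorem \ref{t2}(5) immediately guarantees $x - a \in [0, x]$ whenever $a \leq x$ and that $a \perp (x - a)$; hence the involution identity $(a^\perp)^\perp = a$ (axiom (ii)) is free, and Lemma \ref{l6} gives $a \wedge a^\perp = 0$ together with $a \vee a^\perp = a + (x-a) = x$, settling axiom (iii).

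For the order-reversal axiom (i), given $a \leq b \leq x$, I would use Proposition \ref{p2}(iii) to extract the commutation/squaring identities $ab = ba = a^2$, $ax = xa = a^2$, $bx = xb = b^2$, then compute $(x - b)(x - a) = x^2 - b^2 = (x - b)^2 = (x - a)(x - b)$, which via Proposition \ref{p2}(iii) is exactly $x - b \leq x - a$.

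The existence-of-join axiom (iv) is the first place where some bookkeeping is needed. Given $a \leq b^\perp$ in $[0, x]$, the chain $a \leq x - b$ together with the orthogonality $(x-b) \perp b$ from Theorem \ref{t2}(5) and the downward-closure Theorem \ref{t2}(3) yields $a \perp b$, so by Lemma \ref{l6} the join $a \vee b = a + b$ exists in $R$. What still has to be confirmed is that $a + b$ actually lies in $[0, x]$: here the abelian hypothesis becomes essential, since $a \perp b$ combined with Theorem \ref{t2}(4) and Proposition \ref{p2}(iii) forces $ab = 0$, and then $(a + b)^2 = a^2 + b^2 = ax + bx = (a+b)x$, with the mirror identity on the left, so Proposition \ref{p2}(iii) delivers $a + b \leq x$.

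The orthomodular law (v) is the subtlest of the five conditions. For $a \leq b \leq x$, axiom (i) gives $b^\perp \leq a^\perp$, hence $a \perp b^\perp$, and Lemma \ref{l6} produces $a \vee b^\perp = a + x - b$; applying the orthocomplementation yields $(a \vee b^\perp)^\perp = b - a$. Theorem \ref{t2}(5) again supplies $(b - a) \perp a$ with $b - a \in [0, x]$, so Lemma \ref{l6} finally gives $a \vee (b - a) = b$, which is exactly the orthomodular identity $b = a \vee (a \vee b^\perp)^\perp$. The main obstacle throughout is less computational than conceptual: ensuring at every step that the suprema produced by Lemma \ref{l6} land inside the interval $[0, x]$, which is precisely where abelianness and the equivalent forms of $\leq$ from Proposition \ref{p2} combine to do the heavy lifting.
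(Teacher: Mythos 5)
Your proof is correct, but it follows a genuinely different route from the paper. The paper gives no direct argument at all: it observes (via Proposition \ref{p2} together with Theorem \ref{t3} of Marovt et al.) that on an abelian Rickart $*$-ring the natural partial order coincides with the $*$-order, and then simply cites the fact that the statement is ``also proved independently by Janowitz'' for the $*$-order. You instead verify the five axioms of an orthomodular poset directly for the orthocomplementation $a\mapsto x-a$, and your verifications check out: axiom (ii) is immediate; Theorem \ref{t2}(5) and Lemma \ref{l6} give (iii); the computation $(x-b)(x-a)=(x-a)(x-b)=(x-b)^2=x^2-b^2$ from the identities $ab=ba=a^2$, $ax=xa=a^2$, $bx=xb=b^2$ of Proposition \ref{p2}(iii) gives (i); the chain $a\le x-b$, $(x-b)\perp b$, Theorem \ref{t2}(3) gives $a\perp b$, and the closure argument $(a+b)^2=a^2+b^2=(a+b)x=x(a+b)$ correctly places $a+b$ in $[0,x]$ for (iv); and (v) follows from $(a\vee b^\perp)^\perp=b-a$ together with Theorem \ref{t2}(5) and Lemma \ref{l6}. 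What your approach buys is a self-contained proof that makes explicit exactly where abelianness enters (the commutation identities of Proposition \ref{p2}(iii) and the closure of orthogonal sums inside the interval); what the paper's approach buys is brevity and a placement of the result in the existing literature, at the cost of outsourcing the actual verification to Janowitz. One small point worth stating explicitly in a write-up: since $[0,x]$ is a down-set of $R$, a join computed in $R$ that happens to lie in $[0,x]$ is automatically the join in $[0,x]$ --- you use this implicitly each time you invoke Lemma \ref{l6}.
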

 We know that, if $R$ is a Rickart $*$-ring, then the set of projection
$P(R)$ forms a lattice and the set $\{e\in P(R)\colon e\leq x''\}$ is
 sub lattice of $P(R)$, where $x'$ is a projection which generates
 the right annihilator of $x$.
\begin{theorem} In an abelian Rickart $*$-ring $R$ every interval
 $[0,x]$ is ortho-isomorphic to $\{e\in P(R)\colon e\leq x''\}.$ Hence
 every interval $[0,x]$  is an orthomodular lattice.
 \end{theorem}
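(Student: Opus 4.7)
The plan is to set up an explicit order-isomorphism
$$\phi\colon[0,x]\longrightarrow\{e\in P(R)\colon e\leq x''\},\qquad \phi(a)=RP(a),$$
with proposed inverse $\psi(e)=xe$. Since the target is a principal ideal in the orthomodular lattice $P(R)$ of the Rickart $*$-ring (with orthocomplementation $e\mapsto x''-e$), once $\phi$ is shown to be a bijection preserving order and orthocomplementation, the orthomodular lattice structure transports to $[0,x]$, which also yields the final ``hence'' statement.

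For well-definedness, given $a\leq x$ Proposition \ref{p2} furnishes a projection $e_{0}$ with $a=ae_{0}=xe_{0}$; then $a(1-e_{0})=0$ forces $RP(a)\leq e_{0}$, and using centrality of idempotents together with $x\cdot x''=x$ (since $x''=RP(x)$), we get $a\cdot x''=xe_{0}\cdot x''=x\cdot x''\cdot e_{0}=xe_{0}=a$, so $RP(a)\leq x''$. Conversely, $\psi(e)=xe\leq x$ holds through the projection $e$, again by Proposition \ref{p2}. To show $\phi$ and $\psi$ are mutually inverse, for $a=xe_{0}$ with $RP(a)\leq e_{0}$ compute $x\cdot RP(a)=xe_{0}\cdot RP(a)=a\cdot RP(a)=a$; for $e\leq x''$ the inclusion $RP(xe)\leq e$ is immediate from $xe\cdot e=xe$, while $xe\cdot y=0$ rewrites, by centrality, as $x(ey)=0$, giving $RP(x)(ey)=0$, and since $e\leq RP(x)$ this forces $ey=0$, hence $e\leq RP(xe)$.

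Order and orthocomplement preservation then fall out easily. If $a\leq b\leq x$ with $a=bf$, then $RP(a)\leq RP(b)$; conversely if $RP(a)\leq RP(b)$ then $a=x\cdot RP(a)=x\cdot RP(b)\cdot RP(a)=b\cdot RP(a)=a\cdot RP(a)$, so Proposition \ref{p2} returns $a\leq b$. By Theorem \ref{t2}(5), the orthocomplement of $a$ inside $[0,x]$ is $x-a$; the identity $x-a=x(x''-RP(a))$ (again using $x=x\cdot x''$) together with the already proved $\phi\circ\psi=\mathrm{id}$ yields $\phi(x-a)=RP\bigl(x(x''-RP(a))\bigr)=x''-RP(a)=\phi(a)^{\perp}$.

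The main obstacle is the identification $RP(xe)=e$ for $e\leq x''$: abelianness is indispensable here, for without centrality of projections the crucial step rewriting $xe\cdot y=0$ as $x(ey)=0$, which collapses the right annihilator of $xe$ onto that of $e$, is unavailable. With this cornerstone in place, the remainder is a mechanical application of Proposition \ref{p2} and Theorem \ref{t2}, and the ortho-isomorphism pulls the orthomodular lattice structure of $\{e\in P(R)\colon e\leq x''\}$ back to $[0,x]$.
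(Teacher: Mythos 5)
Your proof is correct and complete: the map $a\mapsto RP(a)$ with inverse $e\mapsto xe$ is well defined, mutually inverse, an order isomorphism in both directions, and carries the orthocomplement $a\mapsto x-a$ of $[0,x]$ to $e\mapsto x''-e$, so the orthomodular lattice structure of the interval $[0,x'']$ in $P(R)$ transports back. The paper in fact states this theorem without giving any proof, and your argument is exactly the one its preceding discussion (identifying $x''$ with $RP(x)$ and invoking the characterization $a\leq b\iff a=ae=be$ for a projection $e$) points to, so it fills that gap rather than diverging from it.
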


\section{comparability axioms}
 Two
projections $e$ and $f$ are said to be {\it equivalent}, written $e\sim
f$, if there is $w\in eRf$ such that $e=ww^*$ and $f=w^*w$, which is
an equivalence relation on the set of projections in a Rickart
$*$-ring. A projection $e$ is said to be \textit{dominated} by the projection
$f$, denoted by $e\lesssim f$, if $e\sim g\leq f$, for some
projection $g$ in $R$. Two projections $e$ and $f$ are said to be
\textit{generalized comparable} if there exists a central projection $h$ such
that $he\lesssim hf$ and $(1-h)f\lesssim (1-h)e$. A $*$-ring is said
to satisfy the {\it generalized comparability} $(GC)$ if any two
projections are generalized comparable. Two projections $e$ and $f$
are said to be {\it partially comparable} if there exist non zero
projections $e_0$, $f_0$ in $R$ such that $e_0\leq e$, $f_0\leq f$
and $e_0\sim f_0$. If for any pair of projections in $R$, $eRf\neq
0$ implies $e$ and $f$ are partially comparable, then $R$ is said to
satisfy \textit{partial comparability} ($PC$). More about comparability axioms on the set of projections in a Rickart $*$-ring can be found in Berberian \cite{skb}.

Drazin \cite{d} extended the relation of equivalence of two projections to general elements of a $*$-ring as follows.
\begin{defn}[{\cite[Definition 2*]{d}}]Let $R$ be a $*$-ring with unity. We say that
$a\sim b$ if and only if there exists $x\in aRb,y\in bRa$ 
such that $aa^*=xx^*, bb^*=yy^*, a^*a=y^*y,b^*b=x^*x$.
\end{defn}
This relation is symmetric on a $*$-ring.  Thakare and Nimbhorkar \cite{nsk} extended the comparability axioms using the above relation and $*$-order to involve all elements of Rickart $*$-ring. 


We provide a relation which is symmetric and transitive on general elements of $*$-ring as an extension of the relation of equivalence of two projections.

\begin{defn}Let $R$ be a $*$-ring with unity. We say that
	$a\sim b$ if and only if there exists $x,y\in R$ such that $aa^*=xx^*, bb^*=yy^*, a^*a=y^*y,b^*b=x^*x$ with $x=ax=xb$ and $y=by=ya$.
\end{defn}
 
Now, we extend the concepts of dominance, $GC, PC$ etc. from the set of projections in a Rickart
$*$-ring to general elements in a $*$-ring.
\begin{defn}
\begin{enumerate}
\item Let $R$ be a $*$-ring with unity. We say that
{\it $a$ is dominated by $b$} if $a\sim c\leq b$ for some $c\in R$. In
notation $a\lesssim b$.
\item A $*$-ring $R$ is said to satisfy the {\it generalized comparability for elements} ($GC$) for elements, if for
any $a,b \in R$ there exists a central projection $h$ such that
$ha\lesssim hb$ and $(1-h)b\lesssim (1-h)a$.
\item Two elements $a,b$ in
a $*$-ring $R$ are said to be {\it partially comparable} if there exists
two non-zero elements $c,d$ in $R$ such that $c\leq a, d\leq b$
with $c\sim d$.
 If for any $a,b \in R$, $aRb\neq 0$ implies $a$ and $b$ are
 partially comparable then we say that $R$ has {\it partial comparability for elements} $(PC)$.
\end{enumerate}
\end{defn}

Clearly, if $a\leq b$ or $a\sim b$, then $a\lesssim b$.



\begin{lem} If $a\lesssim b$ and $h$ is a central projection,
then $ha\lesssim hb$.
\end{lem}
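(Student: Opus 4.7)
The plan is to unpack the definition of dominance into its two pieces and then propagate the central projection $h$ through each piece, using centrality of $h$ and $h^2=h=h^*$ to check the required identities.

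By hypothesis there exists $c\in R$ with $a\sim c$ and $c\leq b$. So first I would handle the order part: since $c\leq b$, there exists $z\in R$ with $c=zc=zb=cz^*=bz^*$. I would propose $w:=hz$ as the witness that $hc\leq hb$. Because $h$ is central with $h^2=h$ and $h^*=h$, each of the five required identities reduces to multiplying one of the $c=zc=zb=cz^*=bz^*$ equations by $h$; for example $w(hb)=hzhb=h^2(zb)=hc$, and $(hc)w^*=hcz^*h=h^2(cz^*)=hc$. Thus $hc\leq hb$ through $hz$.

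Next the equivalence part: by $a\sim c$ there exist $x,y\in R$ with
\[
aa^*=xx^*,\quad cc^*=yy^*,\quad a^*a=y^*y,\quad c^*c=x^*x,
\]
and $x=ax=xc$, $y=cy=ya$. I would set $x':=hx$ and $y':=hy$ as witnesses for $ha\sim hc$. Centrality plus $h=h^2=h^*$ gives, for instance,
\[
(ha)(ha)^*=haa^*h=h^2\,aa^*=h\,aa^*=h\,xx^*=hxx^*h=x'x'^*,
\]
and the same pattern yields the other three equalities $(hc)(hc)^*=y'y'^*$, $(ha)^*(ha)=y'^*y'$, $(hc)^*(hc)=x'^*x'$. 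For the ``sandwich'' identities, $(ha)x'=hahx=h^2(ax)=hx=x'$, $x'(hc)=hxhc=h^2(xc)=hx=x'$, and symmetrically $(hc)y'=y'=y'(ha)$. Hence $ha\sim hc$.

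Combining the two pieces, $ha\sim hc\leq hb$, which is exactly $ha\lesssim hb$. There is no real obstacle: the whole argument is a bookkeeping exercise in pushing $h$ past arbitrary factors and collapsing $h^2$ to $h$; the only thing one must be careful about is to use the precise witness equations $x=ax=xc$ and $y=cy=ya$ (not just $aRc\neq 0$ type information) so that the identities $(ha)x'=x'$ etc. actually follow.
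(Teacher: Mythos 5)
Your proof is correct and is exactly the verification the paper leaves to the reader (the lemma is stated without proof in the source): split $a\sim c\leq b$ into its two pieces and multiply every witness by the central projection $h$, using $h=h^2=h^*$ and centrality to collapse the resulting expressions. All ten identities you need ($hc\leq hb$ through $hz$, and $ha\sim hc$ via $hx$, $hy$) check out, including the sandwich conditions $x'=(ha)x'=x'(hc)$ and $y'=(hc)y'=y'(ha)$ required by the paper's definition of $\sim$.
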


 \begin{defn} Two elements $a$ and $b$ in a $*$-ring $R$
 are said to be {\it very orthogonal} if there exists a central
 projection $h$ such that $ha=a$ and $hb=0$.
 \end{defn}
 
 The relevance of very orthogonality to generalized comparability is as follows:
\begin{theorem}\label{gc} If $a$ and $b$ are elements of a $*$-ring $R$. Then the 
following statements are equivalent.
\begin{enumerate}
\item[i)]$a$ and $b$ are generalized comparable.
\item[ii)] There exists orthogonal decompositions $a=x+y$, $b=z+w$ with
$x\sim z$, $y $ and $w$ are very orthogonal.
\end{enumerate}
\end{theorem}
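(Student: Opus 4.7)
The plan is to prove both directions by explicit constructions that respect the splitting of $R$ along a central projection, using two auxiliary observations. First, if $h$ is a central projection and $c \leq hb$, then Theorem \ref{t1}(4) combined with $hb(1-h) = (1-h)hb = 0$ forces $c = hc = ch$; symmetrically, $d \leq (1-h)a$ forces $d = (1-h)d = d(1-h)$. Second, once elements split cleanly across $hR$ and $(1-h)R$, every mixed product contains a factor $h(1-h) = 0$ and hence vanishes.

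For (i)$\Rightarrow$(ii), I pick $c, d \in R$ with $ha \sim c \leq hb$ and $(1-h)b \sim d \leq (1-h)a$, and set
\[
x = ha + d, \qquad y = (1-h)a - d, \qquad z = c + (1-h)b, \qquad w = hb - c.
\]
Then $a = x+y$ and $b = z+w$, while $(1-h)y = y$ and $(1-h)w = 0$ make $y$ and $w$ very orthogonal at once. For $x \perp y$, the naive witness $v = h + u$, where $u$ witnesses $d \leq (1-h)a$, fails because $u$ need not annihilate $ha$; I replace $u$ by $u' = (1-h)u$, which still witnesses $d \leq (1-h)a$ because $d = (1-h)d$, and put $v = h + u'$. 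A direct computation exploiting $hu' = u'h = 0$ then gives $vx = x = xv^*$ and $vy = 0 = yv^*$; a symmetric truncation handles $z \perp w$. Finally, for $x \sim z$, I combine witnesses $\xi_1, \eta_1$ of $ha \sim c$ with witnesses $\xi_2, \eta_2$ of $d \sim (1-h)b$: the relations $\xi_1 = ha\xi_1$ and $\xi_2 = d\xi_2$ pin $\xi_1 \in hR$ and $\xi_2 \in (1-h)R$ (analogously on the right and for the $\eta_i$), so $\xi := \xi_1 + \xi_2$ and $\eta := \eta_1 + \eta_2$ satisfy the four norm identities and the two fixing identities defining $x \sim z$, every cross term vanishing by the second observation.

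For (ii)$\Rightarrow$(i), I take the central projection $h$ witnessing very orthogonality of $y$ and $w$ and set $k = 1-h$. Then $ka = (1-h)x$ and $kb = (1-h)z + w$. The lemma stated just before the theorem, applied to $x \sim z$ with the central projection $1-h$, yields $(1-h)x \lesssim (1-h)z$. Since $z \perp w$ and $1-h$ is central, the same witness for $z \perp w$ also witnesses $(1-h)z \perp w$, so Theorem \ref{t2} gives $(1-h)z \leq (1-h)z + w = kb$; transitivity of $\lesssim$ now delivers $ka \lesssim kb$. A symmetric argument with $1-k = h$ shows $(1-k)b = hz \lesssim hx + y = (1-k)a$.

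The hard part will be the construction of the orthogonality witness $v$ in step (i)$\Rightarrow$(ii): a single $v$ must simultaneously fix both $ha$ and $d$ while annihilating $(1-h)a - d$ and its adjoint. Truncating the dominance witness $u$ to its $(1-h)$-component $u' = (1-h)u$ is exactly what lets the $h$-part and the $(1-h)$-part of $v$ operate independently, and the same centrality-based bookkeeping is what kills the cross terms in the verification of $x \sim z$.
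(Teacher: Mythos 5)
Your proposal is correct and follows essentially the same route as the paper: the same decomposition $x=ha+d$, $z=c+(1-h)b$, $y=a-x$, $w=b-z$, the same summation of truncated $\sim$-witnesses with cross terms killed by centrality of $h$, and the same use of the preceding lemma together with Theorem \ref{t2} for the converse. The only cosmetic difference is that you certify $x\perp y$ by directly building the witness $v=h+u'$, whereas the paper first shows $ha+k_2\leq a$ and then invokes Theorem \ref{t2}(5).
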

\begin{proof}i) $\Rightarrow$ ii) Suppose $a$ and $b$ are generalized comparable. Let $h$ be a central projection such that $ha\lesssim hb$ and
$(1-h)b\lesssim (1-h)a$. Then $ha\sim k_1\leq hb$, $(1-h)b\sim
k_2\leq (1-h)a$, for some $k_1,k_2\in R$. Hence $k_1=m_1k_1=m_1hb=k_1m_1^*=hbm_1^*,$ for
some $m_1\in R$. Then $k_1=m_1hb$ gives $k_1h=m_1hbh=m_1hb=k_1$. Similarly, $k_2=(1-h)k_2$. Also
$hak_2^*=ha(1-h)k_2^*=0=(ha)^*k_2$, $(1-h)bk_1^*=[(1-h)b]^*k_1=0$. 

We claim that $ha+k_2\sim k_1+(1-h)b$. Since $ha\sim k_1$, there exist $x_1,y_1\in R$ such that $(ha)(ha)^*=x_1x_1^*,~ k_1k_1^*=y_1y_1^*,~(ha)^*(ha)=y_1^*y_1$ and $k_1^*k_1=x_1^*x_1$ with $x_1=hax_1=x_1k_1$ and $y_1=k_1y_1=y_1ha$. Clearly, $x_1=hx_1$ and $y_1=hy_1$, since $k_1h=k_1$. Similarly, 	Since $(1-h)b\sim k_2$, there exist $x_2,y_2\in R$ such that $k_2k_2^*=x_2x_2^*,~ [(1-h)b][(1-h)b]^*=y_2y_2^*,~k_2^*k_2=y_2^*y_2$ and $[(1-h)b]^*[(1-h)b]=x_2^*x_2$ with $x_2=k_2x_2=x_2(1-h)b$ and $y_2=(1-h)by_2=y_2k_2$. Clearly, $x_2=(1-h)x_2$ and $y_2=(1-h)y_2$, since $k_2(1-h)=k_2$.
 	
 	Let $x=x_1+x_2$ and $y=y_1+y_2$. Since $hk_2=0$ and $(1-h)k_1=0$, we have  $(ha+k_2)x=(ha+k_2)(x_1+x_2)=hax_1+hax_2+k_2x_1+k_2x_2=x_1+0+0+x_2=x$, $x[k_1+(1-h)b]=(x_1+x_2)[k_1+(1-h)b]=x_1k_1+x_1(1-h)b+x_2k_1+x_2(1-h)b=x_1+0+0+x_2=x$. Similarly, we have $y=[k_1+(1-h)b]y=y(ha+k_2)$.
 	
 	Also, $xx^*=(x_1+x_2)(x_1+x_2)^*=x_1x_1^*+x_1x_2^*+x_2x_1^*+x_2x_2^*=x_1x_1^*+0+0+x_2x_2^*=(ha)(ha)^*+k_2k_2^*=[ha+k_2][ha+k_2]^*$ and  $x^*x=(x_1+x_2)^*(x_1+x_2)=x_1^*x_1+x_1^*x_2+x_2^*x_1+x_2^*x_2=x_1^*x_1+0+0+x_2^*x_2=k_1^*k_1+[(1-h)b]^*[(1-h)b]=[k_1+(1-h)b]^*[k_1+(1-h)b]$. On the other hand, $yy^*=(y_1+y_2)(y_1+y_2)^*=y_1y_1^*+y_1y_2^*+y_2y_1^*+y_2y_2^*=k_1k_1^*+0+0+[(1-h)b][(1-h)b]^*=[k_1+(1-h)b][k_1+(1-h)b]^*$ and $y^*y=(y_1+y_2)^*(y_1+y_2)=y_1^*y_1+y_1^*y_2+y_2^*y_1+y_2^*y_2=y_1^*y_1+0+0+y_2^*y_2=(ha)^*(ha)+k_2^*k_2=[ha+k_2]^*[ha+k_2]$. Therefore $ha+k_2\sim k_1+(1-h)b$.

Next, we claim that $ha+k_2\leq a$ and $k_1+(1-h)b\leq b$. Since $h$ is a central projection, $k_2\leq (1-h)a\leq a$ and $ha \leq
a$, implies $ k_2=x_1k_2=x_1a=k_2x_1^*=ax_1^*$  and
$ha=x_2ha=x_2a=hax_2^*=ax_2^*$, for some $x_1,x_2\in R$. Let
$y_1=x_1+hx_2$, then
$y_1(ha+k_2)=x_1ha+x_1k_2+hx_2ha+hx_2k_2=ha+k_2$,
$(ha+k_2)y_1^*=hax_1^*+k_2x_1^*+hax_2^*h+k_2x_2^*h=ha+k_2$ and
$y_1a=a(x_1+hx_2)^*=ha+k_2=ay_1^*$, therefore $ha+k_2\leq a$.
Similarly, $(1-h)b+k_1\leq b$. Now put $ha+k_2=x$, $(1-h)b+k_1=z$,
$y=a-x$ and $w=b-z$ implies $hb-k_1=b-z=w$. Then
$hw=h(hb-k_1)=hb-k_1=w$ and $hy=h(a-x)=ha-hx=ha-ha-hk_2=0$ (since $hk_2=0$), {\it i.e.}, $y$
and $w$ are very orthogonal. Thus $a=x+y, b=z+w$ where $x\perp y$,
$z\perp w$ such that we get $x\sim z$ with $y$ and  $w$ are very orthogonal.\\
ii) $\Rightarrow$ i) Let $h$ be a central projection such that $hw=w$ and $hy=0$. Then
$ha=hx+hy=hx$ and $(1-h)b=(1-h)z+(1-h)w=(1-h)z$, where $ha=hx\sim
hz\leq hb$ and $(1-h)b=(1-h)z\sim (1-h)x\leq (1-h)a$. Thus
$ha\lesssim hb$ and $(1-h)b\lesssim (1-h)a$. Hence $a,b$ are
generalized comparable.
 \end{proof}

Next result implies that $GC$ for elements is stronger than $PC$ for elements.
\begin{thm}\label{t4} If $R$ is a $*$-ring with $GC$ for elements then it has $PC$ for elements.
\end{thm}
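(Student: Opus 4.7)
The plan is to use the central projection supplied by $GC$ to split the assumption $aRb \neq 0$ into two orthogonal halves and then extract the partially comparable pair from the relation $\lesssim$ on the nontrivial half. Fix $a,b \in R$ with $aRb \neq 0$. First I would record the observation that for any central projection $h$ and any $r \in R$, since $h$ commutes with everything and $h(1-h)=0$, the cross terms in the expansion of $arb$ vanish, so
\begin{equation*}
arb \;=\; (ha)\,r\,(hb) \;+\; ((1-h)a)\,r\,((1-h)b).
\end{equation*}
Hence $aRb \neq 0$ forces at least one of $(ha)R(hb)$ or $((1-h)a)R((1-h)b)$ to be nonzero.

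Next I would apply $GC$ for elements to obtain a central projection $h$ with $ha \lesssim hb$ and $(1-h)b \lesssim (1-h)a$. A preliminary step is to note that for any central projection $h$, one has $ha \leq a$ (taking $x = h$ in Definition \ref{d1} and using $h^2 = h = h^*$ together with centrality to verify $h \cdot ha = ha = h \cdot a$ and $ha \cdot h = ha = a \cdot h$), and symmetrically $hb \leq b$.

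In the first case, $(ha)R(hb) \neq 0$ so in particular $ha \neq 0$. By $ha \lesssim hb$ there is $c \in R$ with $ha \sim c$ and $c \leq hb$; transitivity gives $c \leq b$, while $ha \leq a$. The remaining point is to verify $c \neq 0$: if $c = 0$, then the witnesses $x,y \in R$ in the definition of $ha \sim c$ satisfy $x = (ha)x = xc = 0$ and $y = cy = 0$, whence $(ha)(ha)^* = xx^* = 0$, and (under the proper-involution hypothesis inherent to the Rickart $*$-ring setting in which these axioms live) this forces $ha = 0$, contradicting $(ha)R(hb)\neq 0$. So $ha$ and $c$ witness partial comparability of $a$ and $b$. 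The second case, $((1-h)a)R((1-h)b) \neq 0$, is entirely symmetric using $(1-h)b \lesssim (1-h)a$: one obtains $d \leq (1-h)a \leq a$ with $(1-h)b \sim d$, and $(1-h)b \leq b$ is nonzero by the same contrapositive argument.

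The main obstacle is really just the nonvanishing of the witness $c$ (or $d$): everything else is a routine application of the definitions together with the splitting via centrality. This nonvanishing reduces to the properness of $\sim$ (i.e.\ $u \sim 0 \Rightarrow u = 0$), which is inherited from proper involution; once that is granted the two cases close uniformly and $PC$ for elements follows.
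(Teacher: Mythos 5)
Your route is genuinely different from the paper's: you argue directly, splitting $arb=(ha)r(hb)+((1-h)a)r((1-h)b)$ via the central projection furnished by $GC$ and extracting the comparable pair from whichever summand survives, whereas the paper argues contrapositively --- assuming $a,b$ are not partially comparable, it invokes Theorem \ref{gc} to write $a=x+y$, $b=z+w$ with $x\sim z$ and $y,w$ very orthogonal, concludes that the $\sim$-related components vanish, and deduces that $a,b$ are very orthogonal so that $aRb=haRb=aRhb=0$. Your preliminary steps ($ha\le a$ through $x=h$ using centrality, the splitting identity, transitivity of $\le$, and $c\le hb\le b$) are all correct.

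The genuine gap is exactly the point you flagged yourself: the nonvanishing of the witness $c$ in $ha\sim c\le hb$. Your computation that $c=0$ forces $x=xc=0$ and hence $(ha)(ha)^*=xx^*=0$ is fine, but the final step $(ha)(ha)^*=0\Rightarrow ha=0$ requires the involution to be proper, and Theorem \ref{t4} is stated for an arbitrary $*$-ring with unity --- there is no Rickart or properness hypothesis available to "inherit." In a general $*$-ring one can have $u\ne 0$ with $uu^*=u^*u=0$; then $u\sim 0\le v$, so $u\lesssim v$ for every $v$, and your first case cannot be closed: $(ha)R(hb)\ne 0$ is perfectly compatible with $c=0$. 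To be fair, the paper's own proof silently relies on the same fact: from "not partially comparable" it can only conclude that one of the two $\sim$-related components is zero, and upgrading that to "both are zero" (equivalently, $u\sim 0\Rightarrow u=0$) again needs properness. So your argument is no worse than the paper's, but as written neither closes under the stated hypotheses; the clean fix is to assume $*$ is proper (or work in a Rickart $*$-ring), after which both proofs go through.
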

\begin{proof} Let $a,b$ are elements of $R$ which are not
partially comparable. We will show that $aRb=0$. Applying $GC$ to
the pair $a,b$ we get orthogonal decompositions $a=x+y$ and
$b=z+w$, where $x\sim z$ and $y,w$ are very orthogonal. If $x\neq
0$ and $w\neq 0$ then $a$ and $b$ are partially comparable, which
is a contradiction to the assumption. Hence $x=0=w$, {\it i.e.}, $a,b$ are
very orthogonal. Let $h$ be a central projection such that $ha=a$
and $hb=0$. Then $aRb=haRb=aRhb=0$. Thus $R$ has $PC$ for elements.
 \end{proof}
 
 \begin{lem}
 In an abelian Rickart $*$-ring $a \perp b$ if and only if $RP(a)RP(b)=0$.
 \end{lem}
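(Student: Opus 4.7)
The plan is to exploit the fact that in an abelian Rickart $*$-ring, projections are central, so $RP(a)$ and $RP(b)$ commute with every element. Write $e=RP(a)$ and $f=RP(b)$, so $ae=ea=a$ and $bf=fb=b$.

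For the easy direction ($\Leftarrow$), I would assume $ef=0$ and simply verify that $x:=e$ witnesses $a\perp b$ in the sense of Definition \ref{d2}. Since $e$ is a self-adjoint central idempotent, the conditions $xa=a=ax^*$ reduce to $ea=a$, which is built into the definition of $RP(a)$. For $xb=bx^*=0$, use $b=bf$ to write $eb=ebf=efb=0$ (by centrality and $ef=0$), and likewise $be=0$. So $x=e$ does the job.

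For the harder direction ($\Rightarrow$), suppose $a\perp b$ through some $x\in R$, i.e.\ $xa=a=ax^*$ and $xb=0=bx^*$. First I would convert $bx^*=0$ into an annihilator statement: this means $x^*\in r(b)=(1-f)R$, whence $fx^*=0$, and taking adjoints $xf=0$; by centrality this also gives $fx=0$. Then the key computation is
\[
a\cdot(ef)=(ef)\cdot a=(ef)\cdot(xa)=e\cdot(fx)\cdot a=0,
\]
where I used that $ef$ is central (product of central projections), that $xa=a$, and that $fx=0$. This shows $ef\in r(a)=(1-e)R$. On the other hand $ef=e^2f=e\cdot(ef)\in eR$. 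Since $eR\cap(1-e)R=\{0\}$ (a standard Peirce decomposition fact for the central idempotent $e$), we conclude $ef=0$.

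The only point that deserves care is the centrality justification when rewriting $(ef)xa$ as $e(fx)a$; once one has $fx=0$, the argument is a two-line calculation. There is no genuine obstacle here — everything reduces to the characterization $r(c)=(1-RP(c))R$ together with commutativity of idempotents in an abelian ring.
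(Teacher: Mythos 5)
Your proof is correct, and in the forward direction it takes a genuinely different route from the paper's. The paper first proves the auxiliary equivalence ``$ab=0$ if and only if $RP(a)RP(b)=0$'' and then reduces $a\perp b$ to $ab=0$: from $a(1-x^*)=0$ it passes to $RP(a)RP(1-x^*)=0$, asserts $RP(1-x^*)=1-RP(x^*)=1-RP(x)$, concludes $RP(a)RP(x)=RP(a)$, and combines this with $RP(x)RP(b)=0$ (from $xb=0$). You instead work directly with $e=RP(a)$, $f=RP(b)$ and the witness $x$: you extract $xf=fx=0$ from $bx^*=0$ via $r(b)=(1-f)R$ and adjoints, then show $ef\in r(a)\cap eR=(1-e)R\cap eR=\{0\}$. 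Your argument is tighter and, notably, avoids the paper's identity $RP(1-x^*)=1-RP(x^*)$, which is not valid for arbitrary $x$ (e.g.\ $x$ invertible and $x\neq 1$ gives $RP(1-x^*)=1$ while $1-RP(x^*)=0$); the paper's intended conclusion $RP(a)RP(x)=RP(a)$ can be salvaged by a more careful argument ($e=xe$ gives $e(1-RP(x))=x(1-RP(x))e=0$ by centrality), but your Peirce-decomposition step $ef\in eR\cap(1-e)R=\{0\}$ gets there more cleanly. The backward directions coincide: both take the central projection $RP(a)$ itself as the witness of orthogonality.
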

\begin{proof}First we show that $ab=0$ if and only if $RP(a)RP(b)=0$. Suppose that $ab=0$ which gives $b\in r(\{a\})=(1-RP(a))R$. Hence $(1-RP(a))b=b$ giving $RP(a)b=0$. Since all projections in $R$ are central, we get $RP(a)\in r(\{b\})=(1-RP(b))R$. Which yields $RP(b)RP(a)=0$. Conversely, if $RP(a)RP(b)=0$, then $ab=(aRP(a))(bRP(b))=aRP(a)RP(b)b=0$.

Next, Suppose that $a\perp b$. Then there exists $x\in R$ such that $xa=a=ax^*$ and $xb=0=bx^*$, {\it i.e.},  $a(1-x^*)=0$. Hence $RP(a)RP(1-x^*)=0$. Since $R$ is abelian, we have $RP(1-x^*)=1-RP(x^*)=1-RP(x)$. Consequently, $RP(a)RP(x)=RP(a)$. On the other hand, $xb=0$ implies $RP(x)RP(b)=0$. Then $RP(a)RP(b)=RP(a)RP(x)RP(b)=0$, hence $ab=0$. Conversely, if $ab=0$, then $RP(a)RP(b)=0$. Thus $RP(a)a=a=aRP(a)$ and $RP(a)b=0=bRP(a)$. Hence $a\perp b$.
\end{proof}
The next result shows that the relation $\sim$ is finitely additive. 
\begin{thm}
Let $R$ be an abelian Rickart $*$-ring. If $a_1\perp a_2,~ b_1\perp b_2$ with $a_1\sim b_1$ and $a_2\sim b_2$, then $a_1+a_2\sim b_1+b_2$, {\it i.e.}, the relation $\sim $ is finitely additive.
\end{thm}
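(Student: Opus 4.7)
The plan is to take $x=x_1+x_2$ and $y=y_1+y_2$, where $(x_i,y_i)$ is the pair of witnesses guaranteed by $a_i\sim b_i$, and then to check each of the six defining identities for $a_1+a_2\sim b_1+b_2$ by expanding and observing that every cross term vanishes.

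The computational engine is the preceding lemma together with a small strengthening that I would spell out first. In an abelian Rickart $*$-ring all projections are central, and from this one verifies $LP(a)=RP(a)=RP(a^*)$ for every $a\in R$. Combined with $u\perp v\Longleftrightarrow RP(u)RP(v)=0$ this forces every one of the products $uv$, $vu$, $u^*v$, $v^*u$, $uv^*$, $vu^*$ to be zero whenever $u\perp v$. Applied to the two orthogonal pairs $a_1,a_2$ and $b_1,b_2$, I get six vanishing products at my disposal for each pair. Using $x_i=a_ix_i=x_ib_i$ and $y_i=b_iy_i=y_ia_i$, a one-line calculation then yields $a_jx_i=0$, $x_ib_j=0$, $b_jy_i=0$ and $y_ia_j=0$ whenever $i\ne j$; this already settles the four ``multiplicative'' identities $(a_1+a_2)x=x$, $x(b_1+b_2)=x$, $(b_1+b_2)y=y$ and $y(a_1+a_2)=y$ at once.

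For the four ``inner product'' identities I need the mixed terms $x_ix_j^*$, $x_i^*x_j$ (and the corresponding $y$-versions) to vanish for $i\ne j$. A typical computation is $x_1x_2^*=(x_1b_1)(x_2b_2)^*=x_1b_1b_2^*x_2^*=0$, using $b_1b_2^*=0$ from the preliminary step; the remaining seven such products are handled identically. Then $xx^*=x_1x_1^*+x_2x_2^*=a_1a_1^*+a_2a_2^*$, while expanding $(a_1+a_2)(a_1+a_2)^*$ and discarding the cross terms $a_ia_j^*=0$ gives the same value, so $xx^*=(a_1+a_2)(a_1+a_2)^*$. The three remaining equalities $x^*x=(b_1+b_2)^*(b_1+b_2)$, $yy^*=(b_1+b_2)(b_1+b_2)^*$ and $y^*y=(a_1+a_2)^*(a_1+a_2)$ are verified in exactly the same fashion. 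The only non-routine step is the preliminary strengthening of the orthogonality lemma; once that is recorded, the rest is bookkeeping.
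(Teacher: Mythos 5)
Your proposal is correct and follows essentially the same route as the paper's proof: both take the additive witnesses $x=x_1+x_2$, $y=y_1+y_2$ and kill all cross terms via the lemma that in an abelian Rickart $*$-ring $u\perp v$ iff $RP(u)RP(v)=0$ (whence $uv=u^*v=uv^*=\dots=0$). Your derivation of the cross-term vanishing directly from $x_i=a_ix_i=x_ib_i$ is marginally more streamlined than the paper's manipulation of $RP(x_i)$, but the argument is the same in substance.
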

\begin{proof}Since $a_1\perp a_2,~ b_1\perp b_2$, we have $RP(a_1)RP(a_2)=0=RP(b_1)RP(b_2)$. Also, $a_1\sim b_1$ and $a_2\sim b_2$ there exists $x_i, y_i\in R$ such that $a_ia_i^*=x_ix_i^*,~ a_i^*a_i=y_i^*y_i,~b_ib_i^*=y_iy_i^*,~ b_i^*b_i=x_ix_i$ with $x_i=a_ix_i=x_ib_i$ and $y_i=b_iy_i=y_ia_i$ for $i=1,2$. This gives $x_i(1-a_i)=0$(since in an  abelian Rickart $*$-ring $RP(x)=LP(x)$), hence $RP(x_i)=RP(x_i)RP(a_i)$, for $i=1,2$. Then for $i\neq j$, we have $x_ia_j=x_iRP(x_i)a_jRP(a_j)=x_iRP(x_i)RP(a_i)a_jRP(a_j)=x_iRP(x_i)RP(a_1)RP(a_j)a_j=0$. Moreover $x_ix_j^*=0=x_i^*x_j$ for $i\neq j$. Similarly,  we have $b_jx_i=0$ for $i\neq j$.

Let $x=x_1+x_2$ and $y=y_1+y_2$. Then $(a_1+a_2)x=a_1x_1+a_2x_1+a_1x_2+a_2x_2=x_1+0+0+x_2$ and   $(b_1+b_2)x=b_1x_1+b_1x_2+b_2x_1+b_2x_2=x_1+0+0+x_2=x$. Consider $xx^*=x_1x_1^*+x_2x_1^*+x_1x_2^*+x_2x_2^*=a_1a_1^*+0+0+a_2a_2^*=(a_1+a_2)(a_1+a_2)^*$ and $x^*x=x_1^*x_1+x_2^*x_1+x_1^*x_2+x_2^*x_2=b_1^*b_1+b_1^*b_2=(b_1+b_2)^*(b_1+b_2)$. Similarly, $y=(b_1+b_2)y=y(a_1+a_2)$, $yy^*=(b_1+b_2)(b_1+b_2)^*$ and $y^*y=(a_1+a_2)^*(a_1+a_2)$. Therefore $a_1+a_2\sim b_1+b_2$.
\end{proof}
Above result ensures that the converse of Theorem \ref{t4} is true for finite abelian Rickart $*$-rings.
\begin{thm}
	Let $R$ be a finite abelian Rickart $*$-ring. Then $GC$ for elements and $PC$ for elements are equivalent.
\end{thm}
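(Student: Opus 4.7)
Since Theorem \ref{t4} already delivers $GC \Rightarrow PC$ for elements, the task is the converse. My approach rests on the characterization in Theorem \ref{gc}: to verify GC for a pair $a, b \in R$ it suffices to exhibit orthogonal decompositions $a = x + y$, $b = z + w$ with $x \sim z$ and $y, w$ very orthogonal. The plan is to build $(x, z)$ by extracting a maximal ``equivalent'' orthogonal family below $(a, b)$, and then to argue that the residuals must be very orthogonal by a contrapositive use of PC.

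Concretely, I would consider the collection $\mathcal{F}$ of finite tuples $\{(x_i, z_i)\}_{i=1}^{n}$ satisfying $x_i \leq a$, $z_i \leq b$, $x_i \sim z_i$, with $\{x_i\}$ mutually orthogonal and $\{z_i\}$ mutually orthogonal. An induction on the immediately preceding theorem shows that $\sim$ is additive along finite orthogonal sums, so each tuple yields $x := \sum_i x_i \leq a$ and $z := \sum_i z_i \leq b$ with $x \sim z$ (using Lemma \ref{l6} together with Theorem \ref{t2}(5)--(6) to place the sums inside the relevant intervals). Apply Zorn's lemma to $\mathcal{F}$ to obtain a maximal tuple, write $(x, z)$ for its associated sums, and set $y = a - x$, $w = b - z$; by Theorem \ref{t2}(5), $y \perp x$ and $w \perp z$.

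I would then show that maximality forces $y$ and $w$ to be very orthogonal. Indeed, if not, then $yRw \neq 0$, because $yRw = 0$ would give $yw = 0$, hence $RP(y)RP(w) = 0$ by the preceding lemma, and since both projections are central this would make $h := RP(y)$ into a central projection with $hy = y$ and $hw = 0$. Invoking PC applied to $(y, w)$, one finds nonzero $y_0 \leq y$ and $w_0 \leq w$ with $y_0 \sim w_0$. By Theorem \ref{t2}(3), $y_0 \perp x$ and $w_0 \perp z$, so adjoining $(y_0, w_0)$ to the maximal tuple yields a strictly larger element of $\mathcal{F}$, a contradiction. Consequently $y, w$ are very orthogonal, and Theorem \ref{gc} delivers GC for $a, b$.

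The main obstacle is making the Zorn/termination step rigorous: $\mathcal{F}$ consists of \emph{finite} tuples, and the additivity of $\sim$ has only been proved along finite orthogonal sums, so an ascending chain in $\mathcal{F}$ need not have an obvious upper bound inside $\mathcal{F}$. The finiteness hypothesis on $R$ is precisely what saves the argument: via the correspondence $x \mapsto RP(x)$ from Proposition \ref{p2}, the iteration reduces to a question about orthogonal families of pairwise equivalent nonzero projections, and in a finite Rickart $*$-ring such families cannot grow indefinitely. The abelian hypothesis is what licenses this reduction, since it pushes the whole problem onto the central Boolean algebra of projections where the classical Berberian-style PC $\Rightarrow$ GC argument can be imported; the technical write-up hinges on showing that this reduction respects the element-level equivalence $\sim$ constructed in the preceding finite-additivity theorem.
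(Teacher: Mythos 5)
Your proposal is correct and follows essentially the same route as the paper: take a maximal pair $a_1\leq a$, $b_1\leq b$ with $a_1\sim b_1$ (existence from finiteness), use PC contrapositively together with the finite additivity of $\sim$ to force the residuals $a-a_1$, $b-b_1$ to be very orthogonal, and conclude via Theorem \ref{gc}. Your tuple/Zorn formulation of the maximality step is in fact more careful than the paper's bare appeal to ``the largest elements,'' but it is the same argument.
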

\begin{proof}Suppose that $R$ has $PC$ for elements. It is enough to show that, $PC$ for elements implies $GC$ for elements. Let $a,b\in R$. If $aRb=0$, then $ab=0$. This gives $RP(a)b=0$. Since $R$ is an abelian ring, we get $a$ and $b$ are very orthogonal.  Hence we are done. Suppose $aRb\neq 0$.  Hence there exist $a_0\leq a$ and $b_0\leq b$ such that $a_0\sim b_0$. Let $a_1, b_1$ be the largest elements such that $a_1\leq a$, $b_1\leq b$ and $a_1\sim b_1$. Then $a_2=a-a_1$ and $b_2=b-b_1$ are such that $a_2\leq a$, $b_2\leq b$, $a_1\perp a_2$ and $b_1\perp b_2$. By the maximality of $a_1$ and $b_1$, we get $a_2Rb_2=0$, which gives $a_2$ and $b_2$ very orthogonal. Thus we get an orthogonal decompositions $a=a_1+a_2$, $b=b_1+b_2$ such that $a_1\sim b_1$, $a_2$ and $b_2$ very orthogonal. By Theorem \ref{gc} we have $a$ and $b$ are generalized comparable.
	\end{proof}

\begin{prop}Let $R$ be a $*$-ring with $GC$ for elements and $e$ is any projection in $R$. Then $eRe$ also has $GC$ for elements.
\end{prop}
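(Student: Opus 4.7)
The plan is to show that if $h \in R$ is a central projection witnessing $GC$ for a pair $a,b \in eRe$ inside $R$, then $h' := he = eh$ is a central projection of the corner $*$-ring $eRe$ that witnesses $GC$ for the same pair inside $eRe$. First I would record the basic structural facts: $eRe$ is a $*$-ring with unity $e$ under the restricted operations and involution; $h'=he$ is idempotent and self-adjoint, since $h$ is central and $e$ is a projection; $h'$ is central in $eRe$, because $h$ commutes with everything in $R$; and for any $x \in eRe$ we have $h'x = hex = hx$ and $(e-h')x = (1-h)x$, so the ``$h$-part'' and the ``$(1-h)$-part'' of $x$ computed inside $eRe$ agree with those computed inside $R$.

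The main technical step is a transfer lemma: if $c,d \in eRe$ and $c \leq d$ (respectively $c \sim d$) holds in $R$, then the same relation holds in the $*$-ring $eRe$. For the partial order, given a witness $u \in R$ with $c = uc = ud = cu^* = du^*$, I would take $u' := eue$ and verify, using $ec = c = ce$ and $ed = d = de$, that $c = u'c = u'd = cu'^* = du'^*$; hence $c \leq d$ in $eRe$. For the equivalence $\sim$, the defining identities $x = cx = xd$ and $y = dy = yc$, combined with $c,d \in eRe$, already force $ex = ecx = cx = x$ and $xe = xde = xd = x$ (and symmetrically for $y$), so the witnesses $x,y$ automatically lie in $eRe$ and continue to witness $c \sim d$ there. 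In particular, $c \lesssim d$ in $R$ with $d \in eRe$ implies $c \in eRe$ (as $c \leq $ intermediate $\leq d$ forces $c \in eRe$ by \textbf{(4)} of Theorem \ref{t1} plus the identities just used), and then $c \lesssim d$ in $eRe$ as well.

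Combining these, from $ha \lesssim hb$ in $R$ we extract $c \in R$ with $ha \sim c \leq hb$; since $hb = h'b \in eRe$, the transfer lemma forces $c \in eRe$ and upgrades both relations to $eRe$, giving $h'a \lesssim h'b$ there. The symmetric argument yields $(e-h')b \lesssim (e-h')a$ in $eRe$, so $h'$ witnesses $GC$ for $(a,b)$ in $eRe$, as required. The only real obstacle is the bookkeeping in the transfer lemma, and this is mild: every step reduces to repeated absorption of factors of $e$ using $ec=c=ce$ and $ed=d=de$, and the equivalence case is essentially free because the equivalence witnesses are anchored on both sides by $c$ and $d$.
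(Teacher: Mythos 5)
Your proof is correct and follows the same route as the paper: both take $h'=he$ as the central projection of the corner ring $eRe$ and verify that it is central there with $h'x=hx$ and $(e-h')x=(1-h)x$ for all $x\in eRe$. Your transfer lemma --- checking that the witnesses for $\leq$ and $\sim$ can be taken inside $eRe$ (via $u'=eue$, and by observing that the $\sim$-witnesses $x=cx=xd$, $y=dy=yc$ automatically lie in $eRe$), so that $ha\lesssim hb$ in $R$ genuinely yields $h'a\lesssim h'b$ in $eRe$ --- is a step the paper's proof silently omits, and including it is a real improvement in rigor.
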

\begin{proof} Let $a,b\in eRe\subseteq R$. Then there exists a
central projection $h$ in $R$ such that $ha\lesssim hb$,
$(1-h)b\lesssim (1-h)a$. Let $g=ehe=he\in eRe$ and $x$ be any
element in $eRe$. Then $gx=hex=hx=xh=xeh=xhe=xg$. Hence $g$ is a
central projection in $eRe$ with $ga=hea=ha, gb=heb=hb$, {\it i.e.},
$ga\lesssim gb$ and $(e-g)b=ab=hab=b-hb=(1-h)b$,
$(e-g)a=ea-hea=a-ha=(1-h)a$, {\it i.e.}, $(e-g)b\lesssim (e-g)a$. Thus
$a$ and $b$ are generalized comparable in $eRe$.
 \end{proof}

\begin{cor} If the matrix ring $M_n(R)$ has $GC$ for elements, then $R$ has $GC$ for elements.
\end{cor}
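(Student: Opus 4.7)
The plan is to exhibit $R$ as a corner $eM_n(R)e$ of the matrix ring for a suitable projection $e$, and then invoke the preceding proposition. This is the standard way of passing properties from matrices to the base ring.

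First I would take $e$ to be the matrix unit $e_{11}$, that is, the element of $M_n(R)$ with a $1$ in the $(1,1)$ position and zeros elsewhere. Assuming the involution on $M_n(R)$ is the natural one induced by $*$ on $R$ (namely the $*$-transpose $(a_{ij})^* = (a_{ji}^*)$), one checks immediately that $e_{11}^2 = e_{11}$ and $e_{11}^* = e_{11}$, so $e_{11}$ is a projection in $M_n(R)$.

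Next I would identify the corner. A direct computation shows that $e_{11}M_n(R)e_{11}$ consists precisely of matrices whose only nonzero entry sits in position $(1,1)$, and the map $\varphi \colon R \to e_{11}M_n(R)e_{11}$ sending $a \mapsto ae_{11}$ is a $*$-ring isomorphism. In particular it preserves the natural partial order, the equivalence relation $\sim$, central projections, and hence the notion of being generalized comparable.

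Finally I would apply the previous proposition to the projection $e_{11}$ in $M_n(R)$: since $M_n(R)$ has $GC$ for elements, so does $e_{11}M_n(R)e_{11}$. Transporting this across the isomorphism $\varphi$ yields $GC$ for elements in $R$. There is no real obstacle here; the only point that needs a moment of care is verifying that $\varphi$ really does carry all of the relevant structure (involution, central projections, the natural partial order, and $\sim$), but each of these is a routine verification from the definitions given earlier in the paper.
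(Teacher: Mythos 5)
Your proposal is correct and is exactly the argument the paper intends: the corollary is stated as an immediate consequence of the preceding proposition, obtained by taking $e=e_{11}$ in $M_n(R)$ (with the $*$-transpose involution) and identifying the corner $e_{11}M_n(R)e_{11}$ with $R$ as a $*$-ring. No gaps.
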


An ideal $I$ of a $*$-ring $R$ is a {\it $*$-ideal} if $a^*\in I$ whenever $a\in I$.

\begin{prop} Let $I$ be a $*$-ideal of $R$. If
$R$ has $GC$ for elements, then $R/I$ has $GC$ for elements.\end{prop}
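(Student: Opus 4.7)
The plan is to exploit that the canonical projection $\pi\colon R\to R/I$ is a $*$-homomorphism (this uses the hypothesis that $I$ is a $*$-ideal), so that $(a+I)^*=a^*+I$ is well-defined. Since $R$ has unity, $R/I$ has unity $1+I$, making $R/I$ a $*$-ring with unity to which the definitions of $\leq$, $\sim$, $\lesssim$, central projection, and $GC$ for elements all apply. The strategy is simply to lift a pair of cosets to elements of $R$, apply $GC$ there, and push the resulting central projection and dominances down via $\pi$.

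The preparatory lemmas I would record (each by a one-line verification) are the following. First, if $h$ is a central projection in $R$, then $\pi(h)$ is a central projection in $R/I$, because $h^2=h$, $h^*=h$, and $hr=rh$ for all $r\in R$ all pass through $\pi$. Second, $\pi$ preserves the natural partial order: if $a\leq b$ through $x$, i.e.\ $a=xa=xb=ax^*=bx^*$, then applying $\pi$ gives $\pi(a)=\pi(x)\pi(a)=\pi(x)\pi(b)=\pi(a)\pi(x)^*=\pi(b)\pi(x)^*$, so $\pi(a)\leq\pi(b)$ through $\pi(x)$. Third, $\pi$ preserves the equivalence $\sim$: if $a\sim b$ is witnessed by $x,y\in R$ satisfying $aa^*=xx^*$, $bb^*=yy^*$, $a^*a=y^*y$, $b^*b=x^*x$, $x=ax=xb$, $y=by=ya$, then $\pi(x),\pi(y)$ witness $\pi(a)\sim\pi(b)$. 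Combining the last two, $\pi$ preserves dominance $\lesssim$.

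Given $\bar a,\bar b\in R/I$, choose any lifts $a,b\in R$. Applying $GC$ for elements in $R$, we obtain a central projection $h\in R$ with $ha\lesssim hb$ and $(1-h)b\lesssim (1-h)a$. Setting $\bar h=\pi(h)$, the lemmas above give that $\bar h$ is a central projection in $R/I$, that $\bar h\bar a=\pi(ha)\lesssim\pi(hb)=\bar h\bar b$, and that $(1-\bar h)\bar b=\pi((1-h)b)\lesssim\pi((1-h)a)=(1-\bar h)\bar a$. Hence $\bar a$ and $\bar b$ are generalized comparable in $R/I$, proving $R/I$ has $GC$ for elements.

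This proof is essentially bookkeeping; there is no real obstacle beyond verifying that each piece of structure is respected by the $*$-homomorphism $\pi$. The only point that requires the full strength of the hypothesis that $I$ is a $*$-ideal (rather than a mere two-sided ideal) is the verification that $R/I$ carries the induced involution and that $\pi$ preserves $*$, which is needed to push $x^*$ and conditions like $a^*a=y^*y$ down from $R$ to $R/I$.
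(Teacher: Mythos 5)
Your proof is correct and follows exactly the same route as the paper's: lift the two cosets, apply $GC$ in $R$, and push the resulting central projection and dominance relations down through the quotient map. The paper compresses all of your preparatory lemmas into the phrase ``passing to cosets,'' so your write-up is just a more explicit version of the same argument.
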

\begin{proof} Let $a+I$, $b+I\in R/I$. Applying $GC$ to $a,b\in
R$, there exists a central projection $h\in R$ such that
$ha\lesssim hb$ and $(1-h)b\lesssim (1-h)a$. Then passing to cosets,
$h+I$ is central projection in $R/I$ such that $(h+I)(a+I)\lesssim
(h+I)(b+I)$ and $[(1+I)-(h+I)](b+I)\lesssim [(1+I)-(h+I)](a+I)$. Hence $R/I$ has $GC$ for elements.
 \end{proof}

\begin{rem} The converse of above statement is not true. For, let $R=\mathbb{Z}_{10}$ with identity map as an involution and $I=\{0,2,4,6,8\}$. Then $R/I=\{0+I,1+I\}$
which has $GC$ for elements trivially. The poset $R$ with natural partial order is depicted in Figure $1$.
\begin{center}
\begin{tikzpicture}
\draw (0,1)--(-1.5,2)--(-2,1)--(0,0)--(-1,1)--(-0.5,2)--(0,1)--(0,0)--(1,1)--(0.5,2)--(0,1)--(1.5,2)--(2,1)--(0,0);
\draw [fill=white](0,1)
circle(.05);\draw [fill=white](-1.5,2)
circle(.05);\draw [fill=white](-2,1)
circle(.05);\draw [fill=white](0,0)
circle(.05);\draw [fill=white](-1,1)
circle(.05);\draw [fill=white](-0.5,2)
circle(.05);\draw [fill=white](1,1)
circle(.05);\draw [fill=white](0.5,2)
circle(.05);\draw [fill=white](1.5,2)
circle(.05);\draw [fill=white](2,1)
circle(.05);
\node [left]at (0,-.1){$0$};\node [above]at (-1.5,2){$7$};\node [left]at (-2,1){$2$};\node [left]at (-1,1){$6$};
\node [above]at (-0.5,2){$1$};\node [right]at (1,1){$4$};\node [above]at (0.5,2){$9$};\node [above]at (1.5,2){$3$};
\node [right]at (2,1){$8$};\node [left]at (0,0.9){$5$};\node [left]at (0.8,-1){Figure 1};

\end{tikzpicture}
\end{center}
Here $R$ does not have $GC$ for elements. On the contrary, if $R$ has $GC$ for elements, then by Theorem \ref{t4}, $R$ has $PC$ for elements. Let $a=2$ and $b=4$.
Then $aRb\neq 0$ and $2\nsim 4$, Since $22^*=4$ and $4^*4=6$ and $R$ being commutative there is no $x\in R$ such that $xx^*=4$ and $x^*x=6$. Hence $2$ and $4$ are not partially comparable in $R$, a contradiction.
\end{rem}


\begin{thebibliography}{99}
\bibitem{skb} Berberian  S.K., {\it Baer $*$-rings}, Springer-Verlag, Berlin and New York, 1972.
\bibitem{d} Drazin M.P., {\it Natural structure on semigroup with involution}, Bull. Amer. Math. Sco.,  \textbf{84}(1),(1978), 139-141.
\bibitem{j} Janowitz M.F. , {\it On the $*$-order for Rickart $*$-rings}, Algebra Universalis, \textbf{16}(1983), 360-369.
\bibitem{kp} Kaplansky I., {\it Rings of Operators}, Benjamin, New York, 1968.
\bibitem{ms2} Maeda  S., {\it On the lattice of projections of a Baer $*$-ring}, J. Sci. Hiroshima Univ.Ser. A,  \textbf{22} (1958), 75-88.
\bibitem{ms3} Maeda  S.,{\it On $*$-rings satisfying the square root axiom }, Proc. Amer. Math. Soc.,  \textbf{52}(1975), 188-190.
\bibitem{jm} Marovt J., Dragan S.R., Dragan S.D., {\it Star, left-star, and right-star partial orders in Rickart ∗-rings}. Linear Multilinear Algebra, \textbf{63}(2) (2015), 343-365. 
\bibitem{mt} Mitsch H., {\it A Natural partial order for semigroups}, Proc. Amer. Math. Soc., \textbf{97}(3),(1986), 384-388.
\bibitem{nsk} Thakare N.K.  and Nimbhorkar S.K., {\it Modular pairs, compatible pairs and comparability axioms in Rickart $*$-rings}, J. Indian Math. Soc., \textbf{59}(1993), 179-190.
\end{thebibliography}
\end{document}